\newcommand {\C}        {{\mathbb{C}}}
\newcommand {\R}        {{\mathbb{R}}}
\newcommand {\N}        {{\mathbb{N}}}
\title{ Numerical Optimization of Eigenvalues of Hermitian
	 Matrix Functions }
\author{
 Emre~Mengi\thanks{ Department of Mathematics, Ko\c{c} University,
Rumelifeneri Yolu, 34450 Sar{\i}yer, \.{I}stanbul, Turkey {\tt
(emengi@ku.edu.tr)}. 
The work of this author was supported in part by the European Commission grant
PIRG-GA-268355 and the T\"{U}B\.{I}TAK (The Scientific and Technological Research Council
of Turkey) Career Grant 109T660.
 }
\and
 E.~Alper~Yildirim\thanks{ Department of Industrial Engineering,
Ko\c{c} University,
Rumelifeneri Yolu, 34450 Sar{\i}yer-\.{I}stanbul, Turkey {\tt
(alperyildirim@ku.edu.tr)}. This author was supported in part by T{\"U}B{\.I}TAK
(The Scientific and Technological Research Council of Turkey) 
  Grant 112M870 and by T{\"U}BA-GEB{\.I}P (Turkish Academy of Sciences Young
 Scientists Award Program).}
 \and
   Mustafa~Kili\c{c}\thanks{ Department of Mathematics, Ko\c{c} University,
 Rumelifeneri Yolu, 34450 Sar{\i}yer, \.{I}stanbul, Turkey {\tt
(mukilic@ku.edu.tr)}.
The work of this author was partly supported by the European Commission Grant
PIRG-GA-268355. }
}
\begin{document}
\maketitle

\begin{abstract}
\noindent
This work concerns the global minimization of a prescribed eigenvalue or a weighted sum of
prescribed eigenvalues of a Hermitian matrix-valued function depending on
its parameters analytically in a box. We describe how the analytical properties 
of eigenvalue functions can be put into use to derive piece-wise quadratic functions 
that underestimate the eigenvalue functions. These piece-wise quadratic under-estimators
lead us to a global minimization algorithm, originally due to Breiman and Cutler.
We prove the global convergence of the algorithm, and show that it can be effectively
used for the minimization of extreme eigenvalues, e.g., the largest eigenvalue or
the sum of the largest specified number of eigenvalues. This is particularly 
facilitated by the analytical formulas for the first derivatives of eigenvalues,
as well as analytical lower bounds on the second derivatives that can be
deduced for extreme eigenvalue functions. The applications that we have in mind
also include the ${\rm H}_\infty$-norm of a linear dynamical system, numerical
radius, distance to uncontrollability and various other non-convex eigenvalue
optimization problems, for which, generically, the eigenvalue function involved is simple 
at all points. \\

\noindent
\textbf{Key words.} 
Hermitian eigenvalues, analytic, global optimization, perturbation of
eigenvalues, quadratic programming \\

\noindent
\textbf{AMS subject classifications.}
65F15,  90C26 
\end{abstract}

\pagestyle{myheadings}
\thispagestyle{plain}
\markboth{E. MENGI, E.A. YILDIRIM, AND M. KILI\c{C}}{
Optimization of Hermitian Eigenvalues }

\section{Introduction}
The main object of this work is a matrix-valued function 
${\mathcal A}(\omega) : \R^d \rightarrow \C^{n\times n}$ that is
\emph{analytic} and \emph{Hermitian} at all $\omega\in{\mathbb R}^d$.
Here, we consider the numerical \emph{global} minimization of a prescribed 
eigenvalue $\lambda(\omega)$ of ${\mathcal A}(\omega)$ over $\omega \in {\mathcal B} \subseteq \R^d$,
where ${\mathcal B}$ denotes a box. From an application point of
view, a prescribed eigenvalue typically refers to the $j$th largest eigenvalue, i.e., 
$\lambda(\omega) := \lambda_j({\mathcal A}(\omega))$, or a weighted sum of
$j$ largest eigenvalues, i.e., $\lambda(\omega) := \sum_{k=1}^j d_k \lambda_{k} ({\mathcal A}(\omega))$
for given real numbers $d_1,\dots,d_j$. 
However, it may as well refer to a particular eigenvalue with respect to a different criterion as 
long as the (piece-wise) analyticity properties discussed below and in Section \ref{sec:Her_eig_pert} are satisfied.

The literature from various engineering fields and applied sciences is rich with eigenvalue
optimization problems that fits into the setting of the previous paragraph. There are problems arising in structural 
design and vibroacoustics, for which the minimization of the largest eigenvalue or maximization of the smallest 
eigenvalue of a matrix-valued function is essential, e.g., the problem of designing the strongest column which originated 
from Euler in the 18th century \cite{Lewis1996}. In control theory, various quantities regarding dynamical systems 
can be posed as eigenvalue optimization problems. For instance, the distance from a linear dynamical system to a nearest 
unstable system \cite{VanLoan1984}, and the ${\rm H}_\infty$-norm of a linear dynamical system have non-convex eigenvalue 
optimization characterizations \cite{Boyd1990}. In graph theory, relaxations of some NP-hard graph partitioning problems give rise 
to optimization problems in which the sum of the $j$ largest eigenvalues is to be minimized \cite{Donath1973}. 

In this paper, we offer a generic algorithm based on the analytical properties of eigenvalues of an analytic and Hermitian
matrix-valued function, that is applicable for any eigenvalue optimization problem whenever lower bounds 
on the second derivatives of the eigenvalue function can be calculated analytically or numerically. All of the existing 
global eigenvalue optimization algorithms in the non-convex setting are designed for specific problems, e.g., 
\cite{Boyd1990, Bruinsma1990, Byers1988, Byers1993, Freitag2011, Gao1993,Gu2000,Gu2006,He1999,Hinrichsen1989, Mengi2005}, 
while widely adopted techniques such as interior point 
methods \cite{Nesterov1994} - when it is possible to pose an eigenvalue optimization problem 
as a semi-definite program - or a bundle method \cite{Makela2002} are effective in the 
convex setting. We foresee non-convex eigenvalue optimization problems
that depend on a few parameters as the typical setting for the use of the algorithm here.

For the optimization of non-convex eigenvalue functions, it appears essential to benefit from the global 
properties of eigenvalue functions, such as their global Lipschitzness or global bounds on their 
derivatives. Such global properties lead us to approximate $\lambda(\omega)$ globally with
under-estimating functions, which we call support functions. Furthermore, the derivatives of the
eigenvalue functions can be evaluated effectively at no cost once the eigenvalue function is
evaluated (due to analytic expressions for the derivatives of eigenvalues in terms of 
eigenvectors as discussed in Section \ref{subsec:first_der}). Therefore, the incorporation of the derivatives 
into the support functions yields \emph{quadratic support functions} on which our algorithm relies.
The quadratic support functions for eigenvalue functions are derived exploiting the analytical 
properties of eigenvalues and presume the availability of a lower bound $\gamma$ on the second 
derivatives of the eigenvalue function that is obtained either analytically or numerically.

\vskip 1ex

\noindent
\textit{\underline{Example:}  Consider the minimization of the largest eigenvalue 
$\lambda_1(\omega) = \lambda_1\left( {\mathcal A}(\omega) \right)$ of
\[
	{\mathcal A} : {\mathbb R} \rightarrow {\mathbb R}^{n\times n}, \;\;\;
	{\mathcal A}(\omega) 	:= 		A_0		+	\omega
A_1	+	\omega^2	A_2,
\]
where $A_0, A_1, A_2 \in {\mathbb R}^{n\times n}$ are given symmetric matrices. It can be deduced 
from the expressions in Section \ref{subsec:sec_der} that
$
	\lambda_1''(\omega)	  \geq	\gamma := 2 \lambda_{\min}
\left(   A_2  \right)
$
for all $\omega$ such that $\lambda_1(\omega)$ is simple. Furthermore, due to expressions
in Section \ref{subsec:first_der}, at all such $\omega$, we have
$
	\lambda_1'(\omega) = v_1(\omega)^T (A_1 + 2\omega A_2) \; v_1(\omega),
$
where $v_1(\omega)$ is a unit eigenvector associated with $\lambda_1(\omega)$. Consequently, it turns out
that, about any $\omega_k \in {\mathbb R}$ where $\lambda_1(\omega_k)$ is simple, there is a support function
\[
	q(\omega) := \lambda_1(\omega_k)	 +  \lambda_1'(\omega_k) (\omega - \omega_k)  + \frac{\gamma}{2} (\omega - \omega_k)^2
\]
satisfying $q(\omega) \leq \lambda_1(\omega)$ for all $\omega\in {\mathbb R}$; see Section \ref{subsec:support_extreme}
for the details.
}

\vskip 1ex

Support functions have earlier been explored by the global optimization
community. The Piyavskii-Shubert 
algorithm \cite{Piyavskii1972, Shubert1972} is derivative-free, and constructs conic support 
functions based on Lipschitz continuity with a known global Lipschitz constant. It converges 
sub-linearly in practice. Sophisticated variants that make use of several Lipschitz constants
simultaneously appeared in the literature \cite{Jones1993,  Sergeyev2006}. The idea of using 
derivatives in the context of global optimization yields powerful algorithms. Breimann
and Cutler \cite{Breiman1993} developed an algorithm that utilizes quadratic support functions 
depending on the derivatives. Some variants of the Breimann-Cutler algorithm are also suggested
for functions with Lipschitz-continuous derivatives;  for instance \cite{Gergel1997, Kvasov2009, Kvasov2012} benefit 
from multiple Lipschitz constants for the derivatives, \cite{Sergeyev1999} estimates Lipschitz constants 
for the derivatives locally, while \cite{Lera2013} modifies the support functions of the Breimann-Cutler
algorithm in the univariate case so that the subproblems become smooth;  however, all these variants 
in the multivariate case end up working on a mesh as a downside. The quadratic support functions that 
we derive for $\lambda(\omega)$ coincide with the quadratic support functions on which the 
Breimann-Cutler algorithm is built on. Consequently, our approach is a variant of the algorithm 
due to Breimann and Cutler \cite{Breiman1993}. 

At every iteration of the algorithm, a global minimizer of a piece-wise
quadratic model defined as the maximum of a set of quadratic support functions is determined.
A new quadratic support function is constructed around this global minimizer, and the
piece-wise quadratic model is refined with the addition of this new support function.
In practice, we observe a linear rate of convergence to a global minimizer.

The algorithm appears applicable especially to extremal eigenvalue functions of the form
\[
		\lambda(\omega) = \sum_{k=1}^j d_k \lambda_k \left( {\mathcal
A}(\omega) \right),
\]
where $d_k$ are given real numbers such that $d_1 \geq d_2 \geq \dots \geq d_j \geq 0$.
This is facilitated by the simple quadratic support functions derived in Section \ref{subsec:support_extreme},
and expressions for the lower bound $\gamma$ on the second derivatives derived
in Section \ref{sec:extreme_low_bound}. The algorithm is also applicable if the eigenvalue
function $\lambda(\omega)$ is simple over all $\omega \in {\mathcal B}$,
which holds for various eigenvalue optimization problems of interest.

\vskip 1ex

\noindent
\textbf{Outline:}
We start in the next section with a list of eigenvalue optimization problems to which
our proposed algorithm fits well. In Section \ref{sec:Her_eig_pert}, the basic results concerning 
the analyticity and derivatives of the eigenvalues of a Hermitian matrix-valued function ${\mathcal A}(\omega)$
that depends analytically on $\omega$ are reviewed. 
In Section \ref{sec:quad_support}, for a general eigenvalue function,
the piece-wise quadratic support functions that are defined as the minimum of $n$ quadratic functions 
are derived. In Section \ref{sec:squad_support}, it is shown that these piece-wise quadratic support functions 
simplify to smooth quadratic support functions for the extremal eigenvalue
functions, as well as for the eigenvalue functions that are simple for all $\omega \in {\mathcal B}$.
Global lower bounds $\gamma$ on the second derivatives of an extremal eigenvalue
function are deduced in Section \ref{sec:extreme_low_bound}.
The algorithm based on the quadratic support functions is presented in Section \ref{sec:algorithm}.
We establish the global convergence of the proposed algorithm in Section
\ref{sec:1d_anal}. Finally, comprehensive
numerical experiments are provided in Section \ref{sec:eig_opt}. The examples indicate the
superiority of the algorithm over the Lipschitz continuity based algorithms, e.g., \cite{Jones1993, Piyavskii1972, Shubert1972}, 
as well as the level-set based approaches devised for particular non-convex eigenvalue optimization
problems, e.g., \cite{Gu2006, Mengi2005}. The reader who prefers to avoid technicalities at first
could glance at the algorithm in Section \ref{sec:algorithm}, then go through 
Sections \ref{sec:Her_eig_pert}-\ref{sec:extreme_low_bound} for the theoretical foundation.

 \section{Applications}\label{sec:app}

\subsection{Quantities Related to Dynamical Systems}\label{sec:app_dyn_sys}
The \emph{numerical radius} $r(A)$ of $A\in {\mathbb C}^{n\times n}$ is the modulus
of the outer-most point in its field of values \cite{Horn1991}, and is defined by
\[
	r(A) := \max
		\{
			| z^{\ast} A z |  \; | \;  z\in {\mathbb C}^n \;\;  {\rm s.t.}  \;\;	\| z \|_2 = 1
		\}.
\]
This quantity gives information about the powers of $A$, e.g.,$\| A^k \| \leq 2r(A)^k$, and is used 
in the literature to analyze the convergence of iterative methods for the solution of linear systems 
\cite{Axelsson1994, Eiermann1993}. An eigenvalue optimization characterization
is given by \cite{Horn1991}:
\[
	r(A)
		=
	- 
	\left[
	\min_{\theta \in [0,2\pi]} \;
	\lambda_n
		\left(
			{\mathcal A}(\theta)
		\right)
	\right], \;\;\;\;
	{\mathcal A}(\theta) := -(Ae^{i\theta} + A^{\ast}e^{-i\theta})/2.
\]

The ${\rm H}_{\infty}$-norm is one of the most widely used norms in practice for the descriptor system
\[
	Ex'(t) =  A x(t) + Bu(t), \;\; {\rm and} \;\;
	y(t) =  C x(t) + Du(t), 
\]
where $u(t)$ and $y(t)$ are the input and output functions, respectively, and 
$E, A \in {\mathbb C}^{n\times n}$, $B \in{\mathbb C}^{n\times m}, C\in {\mathbb C}^{p\times n}, D\in {\mathbb C}^{p\times m}$ with $m, p \leq n$
are the system matrices.  The ${\rm H}_{\infty}$-norm of the transfer function for this system is defined as
\[
	\| H \|_\infty 	:=	\frac{1}{\inf_{\omega\in {\mathbb R}} \;	\sigma_n \left[ H(i\omega)^{\dagger} \right]},
	\;\;\;\;
	H(s) := \left[	C (sE - A)^{-1} B +D \right].
\]
Here and elsewhere, $\sigma_j(\cdot)$ represents the $j$th largest singular value, and
$H(i\omega)^{\dagger}$  denotes the pseudoinverse of $H(i\omega)$. Also above,
with zero initial conditions for the descriptor system,  the transfer function $H(s)$ reveals the 
linear relation between the input and output, as
$
	Y(s)	= H(s) U(s),
$
with $U(s)$ and $Y(s)$ denoting the Laplace transformations of $u(t)$ and 
$y(t)$, respectively.
Note that the ${\rm H}_{\infty}$-norm above is ill-posed (i.e., the associated operator is unbounded) if the 
pencil $L(\lambda) = A - \lambda E$ has an eigenvalue on the imaginary axis or to the
right of the imaginary axis. Therefore, when the ${\rm H}_\infty$-norm is well-posed, the matrix-valued
function ${\mathcal A}(\omega) := H(i\omega)$ is analytic at all $\omega \in
{\mathbb R}$.
A relevant quantity is the (continuous) distance to instability from a matrix $A \in{\mathbb C}^{n\times n}$; 
the eigenvalue optimization characterization for the ${\rm H}_\infty$-norm with $E = B = C = I_n$ and $D = 0$ 
reduces to that for the distance to instability \cite{VanLoan1984} from $A$ with respect to the
$\ell_2$-norm.

Paige \cite{Paige1981} suggested the distance to uncontrollability, for a given $A\in {\mathbb C}^{n\times n}$
and $B\in {\mathbb C}^{n\times m}$ with $m \leq n$, defined by
\[
	\tau(A,B)
		:=
	\inf
		\left\{
			\left\|
			\left[
				\begin{array}{cc}
					\Delta A 	&	\Delta B
				\end{array}
			\right] \right\|_2 \; | \;
			(A+\Delta A, B+\Delta B) \;\; {\rm is} \;\;
			{\rm uncontrollable}
		\right\},
\]
as a robust measure of controllability. Here, the controllability of a linear control system $(A,B)$ of the form $x'(t) = Ax(t) + Bu(t)$ 
means that the function $x(t)$ can be driven into any state at a particular time by some input $u(t)$, and could be 
equivalently characterized as
$
	{\rm rank}
		\left(
			\left[
				\begin{array}{cc}
					A - z I	&	B
				\end{array}
			\right]
		\right) = n,	\;\;\;	\forall z \in {\mathbb C}.
$
Therefore, the eigenvalue optimization characterization for the distance
to uncontrollability takes the form \cite{Eising1984}:
\[
	\tau(A,B)	=
	\min_{z \in {\mathbb C}}
		\sigma_n
			\left(
				{\mathcal A}(z)
			\right), \;\;\;\;
	{\mathcal A}(z) := 
			\left[
				\begin{array}{cc}
					A - z I	&	B
				\end{array}
			\right].
\]

\subsection{Minimizing the Largest or Maximizing the Smallest Eigenvalues}\label{sec:app_min_largest}
In the 18th century, Euler considered the design of the strongest column with a
given
volume with respect to the radii of the cross-sections \cite{Lewis1996, Overton1992}.
The problem can be formulated as finding the parameters, representing the
radii of cross-sections, maximizing the smallest eigenvalue of a fourth order differential 
operator. The analytical solution of the problem has been considered in several
studies in 1970s and in 1980s \cite{Barnes1988,Myers1986,Olhoff1977}, which
were motivated by the earlier work of Keller and Tadjbakhsh
\cite{Tadjbakhsh1962}.
Later, the problem is treated numerically \cite{Cox1992} by means of the
finite-element 
discretization, giving rise to the problem
\begin{equation}\label{eq:minimize_largest}
	{\rm min}_{\omega\in {\mathbb R}^d} \;	\lambda_1 \left(  {\mathcal A}(\omega)   \right).
\end{equation}
The treatment in \cite{Cox1992} yields ${\mathcal A}(\omega)	:=	A_0	+	\sum_{j=1}^d \omega_j A_j$.
In this affine setting, the minimization of the largest eigenvalue is a convex
optimization problem (immediate from Theorem \ref{thm:extreme_low_bound} below)
and received considerable attention \cite{Fletcher1985, Friedland1987, Overton1988}.

In the general setting, when the dependence of the matrix function ${\mathcal A}(\omega)$
on the parameters is not affine, the problem in (\ref{eq:minimize_largest}) is non-convex.
Such non-convex problems are significant (though they are not studied much excluding
a few studies such as \cite{Overton1995} that offer only local analysis) in robust
control theory for 
instance to ensure robust stability. The dual form that concerns the maximization of the smallest 
eigenvalue is of interest in vibroacoustics.

\subsection{Minimizing the Sum of the $j$ Largest
Eigenvalues}\label{sec:app_min_sum_largest}
In graph theory, relaxations of the NP-hard partitioning problems lead to eigenvalue 
optimization problems that require the minimization of the sum of the $j$
largest eigenvalues. For instance,  
given a weighted graph with $n$ vertices and nonnegative integers
$d_1 \geq d_2 \geq \dots \geq d_j$ summing up to $n$,  consider finding a partitioning of the graph 
such that the $\ell$th partition contains exactly $d_\ell$ vertices for $\ell = 1,\dots, j$ and the sum 
of the weights of the edges within each partition is maximized. The relaxation
of this
problem suggested in \cite{Donath1973} is of the form
\begin{equation}\label{eq:minimize_jlargest}
	{\rm min}_{\omega\in {\mathbb R}^d} \;	\sum_{k=1}^j  d_k
				\lambda_k
					\left(
						{\mathcal A}(\omega)
					\right).
\end{equation}
The problem (\ref{eq:minimize_jlargest}) is convex, if ${\mathcal A}(\omega)$ is an affine function 
of $\omega$, as in the case considered by \cite{Donath1973}, see also \cite{Cullum1975}.

Once again, in general, the minimization of the sum of the $j$ largest
eigenvalues is not a 
convex optimization problem, and there are a few studies in the literature that
attempted to analyze the problem
locally for instance around the points where the eigenvalues coalesce \cite{Shapiro1995}.

\section{Background on Perturbation Theory of Eigenvalues}\label{sec:Her_eig_pert}
In this section, we first briefly summarize the analyticity results, mostly
borrowed from \cite[Chapter 1]{Rellich1969}, related to the eigenvalues of matrix-valued functions. 
Then, expressions \cite{Lancaster1964} are provided for the derivatives of Hermitian eigenvalues  
in terms of eigenvectors and the derivatives of matrix-valued functions. Finally, 
we elaborate on the analyticity of singular value problems as special Hermitian eigenvalue problems.

\subsection{Analyticity of Eigenvalues}
\subsubsection{Univariate Matrix Functions}
For a univariate matrix-valued function ${\mathcal A}(\omega)$ that depends 
on $\omega$ analytically, which may or may not be Hermitian, the characteristic
polynomial is of the form
\[
	g(\omega,\lambda) := \det ( \lambda I - {\mathcal A}(\omega) )  =
	a_n(\omega) \lambda^n + \dots + a_1(\omega) \lambda + a_0(\omega),
\]
where $a_0(\omega), \dots, a_n(\omega)$ are analytic functions of $\omega$. It follows
from the Puiseux' theorem (see, e.g., \cite[Chapter 2]{Wall2004})
that each root
$\tilde{\lambda}_j(\omega)$ such that $g(\omega, \tilde{\lambda}_j(\omega)) = 0$
has a Puiseux series of the form
\begin{equation}\label{eq:Puiseux}
	\tilde{\lambda}_j(\omega) = \sum_{k = 0}^{\infty} c_{k,j} \omega^{k/r},
\end{equation}
for all small $\omega$, where $r$ is the multiplicity of the root
$\tilde{\lambda}_j(0)$.

Now suppose ${\mathcal A}(\omega)$ is Hermitian for all $\omega$, and let $\ell$
be the smallest integer such that $c_{\ell,j} \neq 0$. Then, we have
\[
	\lim_{\omega \rightarrow 0^+} \frac{\tilde{\lambda}_j(\omega) -
\tilde{\lambda}_j(0)}{\omega^{\ell/r}} = c_{\ell,j},
\]
which implies that $c_{\ell,j}$ is real, since $\tilde{\lambda}_j(\omega)$ and 
$\omega^{\ell/r}$ are real numbers for each $\omega$.  Furthermore,
\[
	\lim_{\omega \rightarrow 0^-} \frac{\tilde{\lambda}_j(\omega) - \tilde{\lambda}_j(0)}{(-\omega)^{\ell/r} }
							=
		(-1)^{\ell/r} c_{\ell,j}			
\]
is real, which implies that $(-1)^{\ell/r}$ is real, or equivalently
that $\ell/r$ is integer. This observation reveals that the first nonzero
term in the Puiseux series of $\tilde{\lambda}_j(\omega)$ is an integer power of $\omega$.
The same argument applied to the derivatives of $\tilde{\lambda}_j(\omega)$ and the
associated Puiseux series indicates that only integer powers of $\omega$ can appear in 
the Puiseux series (\ref{eq:Puiseux}), that is the Puiseux series reduces to a power series. 
This establishes that $\tilde{\lambda}_j(\omega)$ is an analytic function of $\omega$. 
Indeed, it can also be deduced that, associated with
$\tilde{\lambda}_1(\omega), \dots, \tilde{\lambda}_n(\omega)$, there is an orthonormal set
$\{v_1(\omega), \dots, v_n(\omega) \}$ of eigenvectors, where each of $v_1(\omega), \dots, v_n(\omega)$
varies analytically with respect to $\omega$ (see \cite{Rellich1969} for details).

\begin{theorem}[Rellich]\label{thm:Rellich}
	Let ${\mathcal A}(\omega) : {\mathbb R} \rightarrow {\mathbb C}^{n\times n}$ be a
	Hermitian matrix-valued function that depends on $\omega$ analytically. 
	\begin{itemize}
		\item[\bf (i)] The $n$ roots of the characteristic polynomial of ${\mathcal A}(\omega)$ can 
		be arranged so that each root $\tilde{\lambda}_j(\omega)$ for $j=1,\dots,n$ is an analytic 
		function of $\omega$.
		\item[\bf (ii)] There exists an eigenvector $v_{j}(\omega)$ associated
		with $\tilde{\lambda}_j(\omega)$ for $j = 1,\dots,n$
that satisfies the following:
			\begin{enumerate}
				\item[\bf (1)] 
					$\left(
						\tilde{\lambda}_j(\omega) I - {\mathcal A}(\omega)
					  \right) v_j(\omega)	= 0, \;\;
\forall \omega\in {\mathbb R}$,
				\item[\bf (2)] $\| v_j(\omega) \|_2 = 1, \;\;
\forall \omega\in {\mathbb R}$, 
				\item[\bf (3)] $v_j^\ast (\omega) v_k(\omega) =
0, \;\; \forall \omega \in {\mathbb R}$ for $k \neq j$, and 
				\item[\bf (4)] $v_j(\omega)$ is an analytic function of $\omega$.
			\end{enumerate} 
	\end{itemize}
\end{theorem}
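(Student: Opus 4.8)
The plan is to prove the two parts essentially by an induction/analytic-continuation argument that upgrades the local Puiseux statement already established in the text to a global analytic statement on all of $\mathbb{R}$, and then to extract analytic eigenvectors. For part (i), I would argue as follows. Fix any $\omega_0 \in \mathbb{R}$. The discussion preceding the theorem shows, via Puiseux' theorem together with the reality of the eigenvalues of a Hermitian matrix, that near $\omega_0$ every branch $\tilde{\lambda}_j(\omega)$ of the characteristic polynomial $g(\omega,\lambda)$ is given by a genuine power series in $(\omega - \omega_0)$, i.e.\ is analytic at $\omega_0$. So the set of points at which a full analytic system of roots exists locally is all of $\mathbb{R}$. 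The remaining issue is purely combinatorial: the local branches at overlapping neighbourhoods must be glued into $n$ globally defined analytic functions. I would do this by the standard monodromy-type argument on the connected set $\mathbb{R}$: starting from an analytic labelling near $\omega_0$, analytically continue each branch along $\mathbb{R}$; since each branch extends analytically through every point (it never develops a genuine fractional exponent, by the argument just recalled applied at each point), no branch can ''run off'' to infinity or collide in a way that destroys analyticity, and continuing once in each direction yields $n$ functions $\tilde{\lambda}_1,\dots,\tilde{\lambda}_n$ analytic on all of $\mathbb{R}$ whose values at each $\omega$ are exactly the $n$ roots of $g(\omega,\cdot)$ with multiplicity. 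This is Rellich's theorem and I would cite \cite[Chapter 1]{Rellich1969} for the details of the gluing while giving the one-point local analyticity argument in full.

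For part (ii), I would construct the eigenvectors locally first and then globalize. Near a fixed $\omega_0$, group the branches $\tilde{\lambda}_j(\omega)$ into clusters that coincide at $\omega_0$; for a cluster of size $m$ with common value $\mu = \tilde{\lambda}_j(\omega_0)$, let $P(\omega)$ be the Riesz spectral projector onto the sum of the corresponding eigenspaces, obtained by the contour integral $P(\omega) = \frac{1}{2\pi i}\oint (\zeta I - \mathcal{A}(\omega))^{-1}\, d\zeta$ around $\mu$; this is Hermitian (since $\mathcal{A}(\omega)$ is) and analytic in $\omega$ near $\omega_0$, of constant rank $m$. One then chooses an analytic orthonormal basis of $\mathrm{range}\,P(\omega)$ — e.g.\ by analytically orthonormalizing the columns of $P(\omega)W_0$ for a fixed $W_0$ spanning $\mathrm{range}\,P(\omega_0)$, the Gram matrix being analytic and invertible near $\omega_0$ — and in that basis $\mathcal{A}(\omega)$ restricted to the range is an $m\times m$ Hermitian analytic matrix function whose eigenvalues are exactly the cluster branches; recursively (induction on $n$) this smaller problem admits analytic orthonormal eigenvectors, which pulled back give analytic orthonormal $v_j(\omega)$ near $\omega_0$ satisfying (1)–(4) locally. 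Globalizing this along $\mathbb{R}$ again uses that $\mathbb{R}$ is simply connected, so no monodromy obstruction arises; I would again defer to \cite{Rellich1969} for the bookkeeping. Properties (2) and (3) (unit norm, mutual orthogonality across distinct branches) follow because eigenvectors for distinct eigenvalues of a Hermitian matrix are orthogonal, and within a cluster we built an orthonormal basis by construction; where two branches cross ($\tilde{\lambda}_j(\omega)=\tilde{\lambda}_k(\omega)$ at isolated $\omega$) orthogonality persists by continuity.

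The main obstacle, and the only genuinely nontrivial point, is the passage from ''analytic branch at each point'' to ''globally analytic functions'' in part (i), i.e.\ ruling out that two branches which meet at some $\omega^\ast$ force a fractional-power (non-analytic) crossing there. This is exactly where Hermiticity is essential and is handled by the reality argument already sketched in the text (the first nonzero Puiseux coefficient must be real and sit at an integer power, hence no true branch point occurs at any real $\omega$); everything else is standard complex-analytic continuation on an interval. Since the statement is attributed to Rellich, I would present the local analyticity argument and the Riesz-projector construction in enough detail to be self-contained in spirit, and cite \cite[Chapter 1]{Rellich1969} for the global gluing rather than reproduce it.
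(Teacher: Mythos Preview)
Your proposal is correct and, for part (i), follows essentially the same route as the paper: the local argument via Puiseux series plus the reality of Hermitian eigenvalues forcing integer exponents is exactly what the paper presents in the paragraph preceding the theorem, and like you the paper defers the remaining details to \cite{Rellich1969}. The only difference is that you make the local-to-global continuation step explicit, whereas the paper leaves it implicit.

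For part (ii) you actually go further than the paper does. The paper gives no argument at all for the eigenvectors beyond the sentence ``Indeed it can also be deduced \dots\ (see \cite{Rellich1969} for details)''. Your Riesz-projector construction --- analytic spectral projection onto a cluster, analytic Gram--Schmidt on $P(\omega)W_0$, and induction on the size of the cluster --- is a standard and correct way to carry this out, and is in the spirit of how Rellich (and Kato) actually prove it. So your treatment is strictly more detailed than the paper's on this point; there is no discrepancy in approach, only in how much is written out versus cited.
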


\subsubsection{Multivariate Matrix Functions} \label{subsec:anal_mult_mat_func}
The eigenvalues of a multivariate matrix-valued function 
${\mathcal A}(\omega) : {\mathbb R}^d \rightarrow {\mathbb C}^{n\times n}$ that depends on 
$\omega$ analytically do not have a power series representation in general even when 
${\mathcal A}(\omega)$ is Hermitian. As an example, consider
\[
	{\mathcal A}(\omega)
				=
			\left[
				\begin{array}{cc}
					\omega_1  					&   \frac{\omega_1 + \omega_2}{2}   \\
					\frac{\omega_1 + \omega_2}{2}	&   \omega_2 \\
				\end{array}
			\right]
			\;\;\;	{\rm with}		\;\;\;
	\tilde{\lambda}_{1,2}(\omega)  = \frac{\omega_1 + \omega_2}{2} \pm \sqrt{\frac{\omega_1^2 + \omega_2^2}{2}}.	
\]
On the other hand, it follows from Theorem \ref{thm:Rellich} that, there are underlying eigenvalue functions 
$\tilde{\lambda}_j(\omega), \; j=1,\dots,n$, of ${\mathcal A}(\omega)$, each of
which is analytic 
along every line in ${\mathbb R}^d$, when ${\mathcal A}(\omega)$ is Hermitian. This analyticity 
property along lines in ${\mathbb R}^d$ implies the existence of the first partial derivatives of 
$\tilde{\lambda}_j(\omega)$ everywhere. Expressions for the first partial derivatives will be derived in the 
next subsection, indicating their continuity. As a consequence of the continuity of the first partial derivatives, 
each $\tilde{\lambda}_j(\omega)$ must be differentiable.
 
\begin{theorem}\label{thm:anal_mul_var}
	Let ${\mathcal A}(\omega) : {\mathbb R}^d \rightarrow {\mathbb C}^{n\times n}$ be a
	Hermitian matrix-valued function that depends on $\omega$ analytically.
Then, the
	$n$ roots of the characteristic polynomial of ${\mathcal A}(\omega)$ can be arranged
	so that each root $\tilde{\lambda}_j(\omega)$ is \textbf{(i)} analytic on every line in ${\mathbb R}^d$, 
	and \textbf{(ii)} differentiable on ${\mathbb R}^d$.
\end{theorem}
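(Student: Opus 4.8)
The plan is to reduce both assertions to the univariate Theorem~\ref{thm:Rellich}. For part~\textbf{(i)}, fix an arbitrary line $L = \{\omega_0 + tv : t \in \R\}$ with $\omega_0, v \in \R^d$. The composition $\mathcal{B}(t) := \mathcal{A}(\omega_0 + tv)$ is a univariate matrix function: it inherits analyticity from $\mathcal{A}$ (composition with the affine map $t \mapsto \omega_0 + tv$) and self-adjointness pointwise. Hence Theorem~\ref{thm:Rellich}(i) applies and the $n$ roots of $\det(\lambda I - \mathcal{B}(t))$ — which are precisely the eigenvalues of $\mathcal{A}$ along $L$ — can be labelled $\tilde\lambda_1(t), \dots, \tilde\lambda_n(t)$, each analytic in $t$; Theorem~\ref{thm:Rellich}(ii) moreover supplies analytic, mutually orthonormal eigenvectors $v_j(t)$ attached to these branches, which will be used below and in Section~\ref{sec:Her_eig_pert}. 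This is all of~\textbf{(i)}.

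For part~\textbf{(ii)} I would argue in two stages. First, existence of all partial derivatives: given $\omega_0 \in \R^d$ and a coordinate direction $e_k$, applying~\textbf{(i)} to the line $\omega_0 + t e_k$ yields an analytic branch of the eigenvalue passing through $\tilde\lambda_j(\omega_0)$ at $t = 0$, whose derivative at $t = 0$ is by definition $\partial_{\omega_k} \tilde\lambda_j(\omega_0)$; since $\omega_0$ and $k$ are arbitrary, every partial derivative of $\tilde\lambda_j$ exists throughout $\R^d$. Second, upgrading this to total differentiability: invoke the first-derivative formula derived in the next subsection — schematically $\partial_{\omega_k} \tilde\lambda_j(\omega) = v_j(\omega)^{\ast}\,[\partial_{\omega_k}\mathcal{A}(\omega)]\,v_j(\omega)$, cf.~(\ref{eq:eigval_1der}) — and show its right-hand side is continuous in $\omega$ on $\R^d$. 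Once the partials are known to be continuous, the standard fact that a map with continuous partial derivatives is differentiable delivers~\textbf{(ii)}.

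The genuinely delicate step — the one I expect to absorb most of the work — is the continuity of the derivative expressions at the points where $\tilde\lambda_j$ coalesces with other eigenvalues. Away from such points the matter is easy: there $\tilde\lambda_j(\omega)$ is a simple, isolated eigenvalue in a full neighbourhood, the Riesz projection $P_j(\omega) = \frac{1}{2\pi i}\oint_{\Gamma}(zI - \mathcal{A}(\omega))^{-1}\,dz$ (with $\Gamma$ a small contour enclosing $\tilde\lambda_j(\omega)$ only) is jointly analytic in $\omega$, hence so is $\tilde\lambda_j(\omega) = \operatorname{tr}(\mathcal{A}(\omega)P_j(\omega))$, and differentiability — indeed analyticity — holds automatically. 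At a coalescence point $\omega_{\ast}$, however, the unit eigenvector in the derivative formula is no longer unique, and the sorted arrangement need not be differentiable there at all (the $\pm\sqrt{(\omega_1^2+\omega_2^2)/2}$ example already exhibits this), so one must instead carry the analytic eigenvector branches from~\textbf{(i)} simultaneously along all $d$ coordinate lines through $\omega_{\ast}$, check that the resulting directional derivatives depend only on $\tilde\lambda_j(\omega_{\ast})$ and not on the branch chosen within a coalescing group, and verify that they match the one-sided limits of the simple-eigenvalue formula coming from the surrounding region. This branch bookkeeping, carried out in Section~\ref{sec:Her_eig_pert} together with the explicit derivative formulas, is the crux; combining it with the first stage above then yields~\textbf{(ii)}.
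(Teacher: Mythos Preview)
Your proposal follows essentially the same route as the paper: part~\textbf{(i)} by restricting to a line and invoking Rellich, part~\textbf{(ii)} by first extracting existence of partial derivatives from~\textbf{(i)} and then citing the explicit formula $\partial_{\omega_k}\tilde\lambda_j = v_j^{\ast}(\partial_{\omega_k}\mathcal{A})v_j$ of the next subsection to conclude continuity of those partials, hence differentiability. The paper's own argument is precisely the short paragraph preceding the theorem statement and defers the derivative formula to Section~2.2.4; you have simply made the forward reference explicit and flagged the coalescence-point subtlety that the paper leaves implicit.
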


\subsection{Derivatives of Eigenvalues}
\subsubsection{First Derivatives of Eigenvalues}\label{subsec:first_der}
Consider a univariate Hermitian matrix-valued function ${\mathcal A}(\omega)$
that depends
on $\omega$ analytically. An analytic eigenvalue $\tilde{\lambda}_j(\omega)$ and the 
associated eigenvector $v_j(\omega)$ as described in Theorem \ref{thm:Rellich}
satisfy
\[
	{\mathcal A}(\omega) v_j(\omega) = \tilde{\lambda}_j(\omega) v_j(\omega).
\]
Taking the derivatives of both sides, we obtain
\begin{equation}\label{eq:der_taken}
	\frac{d {\mathcal A}(\omega)}{d \omega} v_j(\omega)  +  {\mathcal A}(\omega) \frac{d v_j(\omega)}{d \omega}
								=
	\frac{d \tilde{\lambda}_j(\omega)}{d \omega} v_j(\omega)  +  \tilde{\lambda}_j(\omega) \frac{d v_j(\omega)}{d \omega}.
\end{equation}
Multiplying both sides by $v_j(\omega)^{\ast}$ and using the identities
	$v_j(\omega)^{\ast} {\mathcal A}(\omega) = v_j(\omega)^{\ast} \tilde{\lambda}_j(\omega)$
as well as
	$v_j(\omega)^{\ast} v_j(\omega) = \| v_j(\omega) \|^2_2 = 1$,
we get 
\begin{equation}\label{eq:eigval_1der}
	\frac{d \tilde{\lambda}_j(\omega)}{d \omega}
					=
		v_j(\omega)^{\ast}  \frac{d {\mathcal A}(\omega)}{d \omega}   v_j(\omega).
\end{equation}

\subsubsection{Second Derivatives of Eigenvalues}\label{subsec:sec_der}
By differentiating both sides of (\ref{eq:eigval_1der}),  it is possible to deduce the formula
(the details are omitted for brevity)
\small
\begin{equation}\label{eq:eigval_2der}
	\frac{d^2 \tilde{\lambda}_j(\omega)}{d \omega^2}
				=
	v_j(\omega)^{\ast} \frac{d^2 {\mathcal A}(\omega)}{d \omega^2} v_j(\omega)
				+
	2   \sum_{k=1, k\neq j}^n 
				\frac{1}{\tilde{\lambda}_j(\omega) - \tilde{\lambda}_k(\omega)}
		\left| v_k(\omega)^\ast   \frac{d {\mathcal A}(\omega)} {d \omega} v_j(\omega) \right|^2
\end{equation}
\normalsize
for the second derivatives assuming that the (algebraic) multiplicity of $\tilde{\lambda}_j(\omega)$
is one.

If, on the other hand, the eigenvalues repeat at a given $\hat{\omega}$,
specifically when the (algebraic) multiplicity of $\tilde{\lambda}_j(\hat{\omega})$ is greater than 
one, the formula (\ref{eq:eigval_2der}) generalizes as
\small
\begin{equation}\label{eq:eigval_2der_rep}
	\frac{d^2 \tilde{\lambda}_j(\hat{\omega})}{d \omega^2}
				=
	v_j(\hat{\omega})^{\ast} \frac{d^2 {\mathcal A}(\hat{\omega})}{d \omega^2} v_j(\hat{\omega})
				+
	2   \sum_{k=1, k\neq j, k \notin \alpha}^n
		\lim_{\tilde{\omega} \rightarrow \hat{\omega}} 
		\left(
				\frac{1}{\tilde{\lambda}_j(\tilde{\omega}) - \tilde{\lambda}_k(\tilde{\omega})}
		\left| v_k(\tilde{\omega})^\ast   \frac{d {\mathcal A}(\tilde{\omega})} {d \omega} v_j(\tilde{\omega}) \right|^2
		\right).
\end{equation}
\normalsize
Here, $\alpha$ denotes the set of indices of the analytic eigenvalues (specified
in 
Theorem \ref{thm:Rellich}) that are identical to $\tilde{\lambda}_j(\omega)$ at all $\omega$.

\subsubsection{Derivatives of Eigenvalues for Multivariate Hermitian Matrix Functions}\label{subsec:multi_der}
Let ${\mathcal A}(\omega) : {\mathbb R}^d \rightarrow {\mathbb C}^{n\times n}$ be Hermitian and analytic.
It follows from (\ref{eq:eigval_1der}) that 
\begin{equation}\label{eq:eigval_part_der}
	\frac{\partial \tilde{\lambda}_j(\omega)}{\partial \omega_k}
			=	
	v_j^{\ast}(\omega) \frac{\partial {\mathcal A}(\omega)}{\partial \omega_k} v_j(\omega).
\end{equation}
Since ${\mathcal A}(\omega)$ and $v_j(\omega)$ are analytic with respect to $\omega_\ell$ for $\ell = 1,\dots,n$, 
this implies the continuity, also the analyticity with respect to $\omega_\ell$, of each partial derivative 
$\partial \tilde{\lambda}_j(\omega)/\partial \omega_k$, and hence the existence of  $\partial^2
\tilde{\lambda}_j(\omega)/(\partial \omega_k \partial \omega_\ell)$,
everywhere. If the multiplicity of $\tilde{\lambda}_j(\omega)$ is one, differentiating 
both sides of (\ref{eq:eigval_part_der}) with respect to $\omega_\ell$ would yield the following 
expressions for the second partial derivatives.
\small
\begin{eqnarray*}
	\frac{\partial^2 \tilde{\lambda}_j(\omega)}{\partial \omega_k \; \partial \omega_\ell}
						= &
		v_j^{\ast}(\omega)	\frac{\partial^2 {\mathcal A}(\omega)}{\partial \omega_k \; \partial \omega_l} v_j(\omega)
							+  \hskip 58ex \\
						&
		2\cdot \Re
			\left(
					\sum_{m=1, m\neq j}^n	
						\frac{1}{\tilde{\lambda}_j(\omega) - \tilde{\lambda}_m(\omega)}  
						\left(
							v_j(\omega)^{\ast} \frac{\partial {\mathcal A}(\omega)}{\partial \omega_k} v_m(\omega)
						\right) 
						\left(
							v_m(\omega)^{\ast} \frac{\partial {\mathcal A}(\omega)}{\partial \omega_\ell} v_j(\omega)
						\right)
			\right).
\end{eqnarray*}
\normalsize
Expressions similar to (\ref{eq:eigval_2der_rep}) can be obtained for the second partial derivatives 
when $\tilde{\lambda}_j(\omega)$ has multiplicity greater than one.

\subsection{Analyticity of Singular Values}\label{sec:anal_sing_vals}
Some of the applications (see Section \ref{sec:app_dyn_sys}) concern the
optimization 
of the $j$th largest singular value of an analytic matrix-valued function. The singular value problems 
are special Hermitian eigenvalue problems. In particular, denoting the $j$th largest singular 
value of an analytic matrix-valued function ${\mathcal B}(\omega) : {\mathbb R}^d \rightarrow {\mathbb C}^{n\times m}$
(not necessarily Hermitian) by $\sigma_j(\omega)$, the set of eigenvalues of the Hermitian
matrix-valued function 
\[
	{\mathcal A}(\omega)	:=
	\left[
	\begin{array}{cc}
		0						&	{\mathcal B}(\omega) \\
		{\mathcal B}(\omega)^{\ast}	&	0	\\
	\end{array}
	\right],
\]
is $\{ \sigma_j(\omega), -\sigma_j(\omega) : j = 1,\dots, n \}$. In the univariate case 
$\sigma_j(\omega)$ is the $j$th largest of the $2n$ analytic eigenvalues, 
$\tilde{\lambda}_1(\omega),\dots,\tilde{\lambda}_{2n}(\omega)$, of ${\mathcal A}(\omega)$.
The multivariate $d$-dimensional case is similar, with the exception that each 
eigenvalue $\tilde{\lambda}_j(\omega)$ is differentiable and analytic along every line
in ${\mathbb R}^d$. Let us focus on the univariate case throughout the rest of this
section. Extensions to the multi-variate case are similar to the previous
sections.
Suppose 
$v_j(\omega)
			:=
	\left[
		\begin{array}{c}
			u_j(\omega) \\
			w_j(\omega)
		\end{array}
	\right]$,
with $u_j(\omega) \in {\mathbb C}^n$, $w_j(\omega) \in {\mathbb C}^m$, is the analytic eigenvector
function as 
specified in Theorem \ref{thm:Rellich} of ${\mathcal A}(\omega)$ associated 
with $\tilde{\lambda}_j(\omega)$, that is
\[
	\left[
	\begin{array}{cc}
		0						&	{\mathcal B}(\omega) \\
		{\mathcal B}(\omega)^{\ast}	&	0	\\
	\end{array}
	\right]
	\left[
		\begin{array}{c}
			u_j(\omega) \\
			w_j(\omega)
		\end{array}
	\right]
			=
	\tilde{\lambda}_j(\omega)
	\left[
		\begin{array}{c}
			u_j(\omega) \\
			w_j(\omega)
		\end{array}
	\right].
\]
The above equation implies
\begin{equation}\label{eq:sing_val_defn}
	{\mathcal B}(\omega) w_j(\omega) = \tilde{\lambda}_j(\omega) u_j(\omega)
		\;\;\; {\rm and} \;\;\;
	{\mathcal B}(\omega)^{\ast} u_j(\omega) = \tilde{\lambda}_j(\omega) w_j(\omega).
\end{equation}
In other words, $u_j(\omega)$, $w_j(\omega)$ are analytic, and consist of a 
pair of consistent left and right singular vectors associated with $\tilde{\lambda}_j(\omega)$.
To summarize, in the univariate case, $\tilde{\lambda}_j(\omega)$ can be
considered as a signed analytic singular value of ${\mathcal B}(\omega)$, and there is a consistent
pair of analytic left and right singular vector functions, $u_j(\omega)$ and
$w_j(\omega)$, 
respectively.

Next, in the univariate case, we derive expressions for the first derivative of
$\tilde{\lambda}_j(\omega)$, 
in terms of the corresponding left and right singular vectors.  It follows from the singular value equations
(\ref{eq:sing_val_defn}) above that $\| u_j(\omega) \| = \| w_j(\omega) \| = 1/\sqrt{2}$ 
(if $\tilde{\lambda}_j(\omega) = 0$, this equality follows from analyticity).
Now, the application of the expression 
(\ref{eq:eigval_1der}) yields
\begin{eqnarray*}
	\frac{d \tilde{\lambda}_j(\omega)}{d \omega}
					= &
	\left[
		\begin{array}{cc}
			u_j(\omega)^{\ast}	&	w_j(\omega)^{\ast}
		\end{array}
	\right]
	\left[
			\begin{array}{cc}
				0	&	d{\mathcal B}(\omega) / d\omega  \\
				d{\mathcal B}(\omega)^{\ast} / d\omega	&	0 \\
			\end{array}
	\right]
	\left[
		\begin{array}{c}
			u_j(\omega) \\
			w_j(\omega)
		\end{array}
	\right], \\
			= &
	u_j(\omega)^{\ast} \frac{d {\mathcal B}(\omega)}{d \omega} w_j(\omega)
			+
	w_j(\omega)^{\ast} \frac{d {\mathcal B}(\omega)^{\ast}}{d \omega}
u_j(\omega),
			\hskip 18ex \\
			= &
	2\cdot \Re
		\left(
			u_j(\omega)^{\ast} \frac{d {\mathcal B}(\omega)}{d \omega} w_j(\omega)
		\right). \hskip 30ex
\end{eqnarray*}
In terms of the unit left and right singular vectors $\hat{u}_j(\omega) :=
\sqrt{2} \cdot u_j(\omega)$ and 
$\hat{w}_j(\omega) := \sqrt{2} \cdot w_j(\omega)$, respectively, associated
with $\tilde{\lambda}_j(\omega)$, 
we obtain
\begin{equation}\label{eq:first_der_sval}
	\frac{d \tilde{\lambda}_j(\omega)}{d \omega}
				=
	\Re
		\left(
			\hat{u}_j(\omega)^{\ast} \frac{d {\mathcal B}(\omega)}{d \omega} \hat{w}_j(\omega)
		\right).	
\end{equation}

\noindent
\textbf{Notation:} Throughout the rest of the text, we denote the eigenvalues of ${\mathcal A}(\omega)$ 
that are analytic in the univariate case (stated in Theorem \ref{thm:Rellich}), and differentiable and 
analytic along every line in the multivariate case (stated in Theorem \ref{thm:anal_mul_var}) with 
$\tilde{\lambda}_1(\omega), \dots, \tilde{\lambda}_n(\omega)$. On the other hand, $\lambda_j(\omega)$
or $\lambda_j({\mathcal A}(\omega))$ denotes the $j$th largest eigenvalue, and
$\sigma_j(\omega)$ or $\sigma_j({\mathcal A}(\omega))$ denotes the $j$th largest singular value
 of ${\mathcal A}(\omega)$.

\section{Piece-wise Quadratic Support Functions}\label{sec:quad_support}

Let $\tilde{\lambda}_1, \dots, \tilde{\lambda}_n : {\mathbb R}^d \rightarrow {\mathbb R}$ be 
eigenvalue functions of a Hermitian matrix-valued function ${\mathcal A}(\omega)$ that are analytic
along every line in ${\mathbb R}^d$ and differentiable on ${\mathbb R}^d$, and let 
${\mathcal B} \subset {\mathbb R}^d$ be the box defined by
\begin{equation}\label{eq:box}
	{\mathcal B} := {\mathcal B}\left( \omega^{(l)}_1,\omega^{(u)}_1,\dots,\omega^{(l)}_d,\omega^{(u)}_d  \right)
				:=	
				\left\{
					\omega\in {\mathbb R}^d \; | \;
					\omega_j \in \left[ \omega^{(l)}_j, \omega^{(u)}_j \right] \;\; {\rm for} \;\; j=1,\dots,d
				\right\}.
\end{equation}
Consider the closed and connected subsets ${\mathcal P}_1,\dots, {\mathcal P}_q$ of ${\mathcal B}$, 
with $q$ as small as possible, such that $\cup_{k=1}^q {\mathcal P}_k = {\mathcal B}$, and
${\mathcal P}_k \cap {\mathcal P}_\ell = \partial {\mathcal P}_k \cap \partial {\mathcal P}_\ell$ for each 
$k$ and $\ell$, and such that in the interior of ${\mathcal P}_k$ none of the eigenvalue functions 
$\tilde{\lambda}_1, \dots, \tilde{\lambda}_n$ intersect each other. Define 
$\lambda : {\mathcal B} \rightarrow {\mathbb R}$ as follows:
\begin{equation}\label{eq:eigval_defn}
	\lambda(\omega)	:=	
			f\left(
					\tilde{\lambda}_{s_{k1}}(\omega),	\dots,	\tilde{\lambda}_{s_{kj}}(\omega)
			  \right) \;\; {\rm for} \; {\rm all} \; \omega\in {\rm int}\left( {\mathcal P}_k \right)
\end{equation}
where $f$ is analytic, and
$s_k = 
[
\begin{array}{ccc}
	s_{k1} & \dots & s_{kj}
\end{array}
]^T \in {\mathbb Z}_+^j
$ is a vector of indices such that
\[
	\tilde{\lambda}_{s_{ki}}(\omega)	=    \tilde{\lambda}_{s_{\ell i}} (\omega)	\;\; {\rm for} \; i = 1,\ldots,j,
\]
and for all $\omega \in \partial {\mathcal P}_k \cap \partial {\mathcal P}_\ell$ in order to ensure the
continuity of $\lambda(\omega)$ on ${\mathcal B}$. The extremal eigenvalue function
$\lambda(\omega) = \sum_{k = 1}^j   d_k \lambda_{k} (\omega)$ fits into the framework.

We derive a piece-wise quadratic support function $q_k(\omega)$ about a given
point $\omega_k \in {\mathcal B}$ bounding $\lambda(\omega)$ from below
for all $\omega\in {\mathcal B}$, and such that $q_k(\omega_k) = \lambda(\omega_k)$. Let us focus on 
the direction $p := (\omega - \omega_k)/\| \omega - \omega_k \|$, the univariate function 
$\phi(\alpha) := \lambda(\omega_k + \alpha p)$, and the analytic univariate functions 
$\tilde{\phi}_j(\alpha) := \tilde{\lambda}_j(\omega_k + \alpha p)$ for $j = 1,\dots,n$. Also, let us denote 
the isolated points in the interval $\left[ 0, \| \omega - \omega_k \| \right]$, where 
two distinct functions among $\tilde{\phi}_1(\alpha),\dots,\tilde{\phi}_n(\alpha)$ intersect each other by 
$\alpha^{(1)}, \dots, \alpha^{(m)}$. At these points, $\phi(\alpha)$ may not be differentiable. We have
\begin{equation}\label{eq:quad_der_mult_int}
	\lambda(\omega) = \lambda(\omega_k) + \sum_{\ell = 0}^m
\int_{\alpha^{(\ell)}}^{\alpha^{(\ell+1)}}  \phi'(t) dt,
\end{equation}
where $\alpha^{(0)} := 0$ and $\alpha^{(m+1)} := \| \omega - \omega_k \|$. Due to the existence
of the second partial derivatives of $\tilde{\lambda}_j(\omega)$ (since the expression (\ref{eq:eigval_part_der})
implies the analyticity of the first partial derivatives with respect to each parameter disjointly), 
there exists a constant $\gamma$ that satisfies 
\begin{equation}\label{eq:lb_sec_der}
	\lambda_{\min} \left( \nabla^2 \tilde{\lambda}_j (\omega) \right) \geq
\gamma, \quad \textrm{for all}~\omega\in {\mathcal B}, \quad j = 1,\dots,n.
\end{equation}
Furthermore, 
$
	\tilde{\phi}_j''(\alpha) = p^T \; \nabla^2 \tilde{\lambda}_j (\omega_k + \alpha p) \; p \geq 
				\lambda_{\rm min} \left( \nabla^2 \tilde{\lambda}_j (\omega_k +\alpha p) \right) \geq \gamma$
for all $\alpha\in \left[ 0, \| \omega - \omega_k \| \right]$. Thus, applying the mean value theorem to the analytic
functions $\tilde{\phi}'_j(\alpha)$ for $j=1, \dots,n$ and since $\phi'(t) \geq {\rm min}_{j=1,\ldots,n} \tilde{\phi}'_j(t)$, 
we obtain
\[
	\phi'(t) \geq {\rm min}_{j=1,\ldots,n} \tilde{\phi}'_j(0) + \gamma t.
\]
By substituting the last inequality in (\ref{eq:quad_der_mult_int}), 
integrating the right-hand side of (\ref{eq:quad_der_mult_int}), and using 
$\tilde{\phi}_j'(0) = \nabla \tilde{\lambda}_j(\omega_k)^T p$ (since
$\tilde{\lambda}_j$ is differentiable), we arrive at the following:
\begin{theorem}
Suppose ${\mathcal A}(\omega) : {\mathbb R}^d \rightarrow {\mathbb C}^{n\times n}$ is an analytic and Hermitian
matrix-valued function, the eigenvalue function $\lambda(\omega)$ is defined as in (\ref{eq:eigval_defn}) in terms 
of the eigenvalues $\tilde{\lambda}_1(\omega), \dots, \tilde{\lambda}_n(\omega)$ of ${\mathcal A}(\omega)$ that are
differentiable and analytic on every line in ${\mathbb R}^d$,
and $\gamma$ is a lower bound as in (\ref{eq:lb_sec_der}). Then the following inequality holds for all $\omega \in {\mathcal B}$:
\begin{equation}\label{eq:quad_model_multi}
	\lambda(\omega)	\;\; 	\geq		\;\;
	\left[
		q_k(\omega)	:=	
			\lambda(\omega_k)  +  \left( {\rm min}_{j=1,\ldots,n}
				\nabla \tilde{\lambda}_j(\omega_k)^T (\omega - \omega_k) \right)  +  \frac{\gamma}{2} \| \omega - \omega_k \|^2
	\right].
\end{equation}
\end{theorem}


\section{Simplified Piece-wise Quadratic Support Functions}\label{sec:squad_support}

\subsection{Support Functions under Generic Simplicity}\label{subsec:support_analyticity}
In various instances, the eigenvalue functions $\tilde{\lambda}_1, \dots, \tilde{\lambda}_n$
do not intersect each other at any $\omega \in {\mathcal B}$ generically. In
such cases, for some $j$,
we have $\lambda(\omega) = \tilde{\lambda}_j(\omega)$ for all $\omega \in {\mathcal B}$, therefore
$\lambda(\omega)$ is analytic in the univariate case and analytic along every line in the multivariate
case. For instance, the singular values of the matrix function ${\mathcal A}(\omega) := C(\omega i I - A)^{-1} B + D$
involved in the definition of the ${\rm H}_\infty$-norm do not coalesce at any $\omega
\in {\mathbb R}$ on a dense
subset of the set of quadruples $(A,B,C,D)$. Similar remarks apply to all of the specific eigenvalue optimization problems
in Section \ref{sec:app_dyn_sys}.

Under the generic simplicity assumption, the piece-wise quadratic support function (\ref{eq:quad_model_multi})
simplifies to
\begin{equation}\label{eq:quad_simple_multi}
	q_k(\omega)	=	\lambda(\omega_k)	+ \nabla \lambda(\omega_k)^T (\omega - \omega_k) + \frac{\gamma}{2} \| \omega - \omega_k \|^2.
\end{equation}
Here, $\gamma$ is a lower bound on $\lambda_{\min} \left( \nabla^2 \lambda
(\omega) \right)$ for all $\omega \in {\cal B}$.
In many cases, it may be possible to obtain a rough lower bound $\gamma$
numerically by means of the
expressions for the second  derivatives in Sections \ref{subsec:sec_der} and
\ref{subsec:multi_der},
and exploiting the Lipschitz continuity of the eigenvalue $\lambda(\omega)$ and other eigenvalues.

\subsection{Support Functions for Extremal Eigenvalues}\label{subsec:support_extreme}
Consider the extremal eigenvalue function 
\begin{equation}\label{eq:extreme_eig}
	\lambda(\omega) = \sum_{k=1}^j d_k \lambda_k(\omega)
\end{equation}
for given real numbers $d_1 \geq d_2 \geq \dots \geq d_j \geq 0$. A special case when
$d_1, d_2, \dots  d_j$ are integers is discussed in Section \ref{sec:app_min_sum_largest}.
When $d_1 = 1, d_2 = \dots = d_j = 0$, this reduces to the maximal eigenvalue function 
$\lambda(\omega) = \lambda_1(\omega)$ in Section \ref{sec:app_min_largest}. 

For simplicity, let us suppose that $\lambda(\omega)$ is differentiable at $\omega_k$, 
about which we derive a support function below. This is generically  the case. In the unlikely case 
of two eigenvalues coalescing at $\omega_k$, the non-differentiability is isolated at this point. 
Therefore, $\lambda(\omega)$ is differentiable at all nearby points. For a fixed $\omega \in {\mathcal B}$, 
as in the previous section, define $\phi : {\mathbb R} \rightarrow {\mathbb R}, \; \phi(\alpha)  :=  \lambda(\omega_k + \alpha p)$ 
for $p = (\omega - \omega_k) / \| \omega - \omega_k \|$.  Denote the points $\alpha \in (0,\| \omega - \omega_k \|]$ 
where either one of $\lambda_1(\omega_k + \alpha p), \dots, \lambda_j(\omega_k + \alpha p)$ 
is not simple by $\alpha^{(1)}, \dots, \alpha^{(m)}$ in increasing order. These are the points where
$\phi(\alpha) $ is possibly not analytic. At any $\alpha \in (0,\| \omega - \omega_k \|)$, by $\phi_{+,\alpha}$
we refer to the analytic function satisfying $\phi_{+,\alpha}(\tilde{\alpha}) = \phi(\tilde{\alpha})$ for all 
$\tilde{\alpha} > \alpha$ sufficiently close to $\alpha$. Similarly, $\phi_{-,\alpha}$ refers to the analytic 
function satisfying  $\phi_{-,\alpha}(\tilde{\alpha}) = \phi(\tilde{\alpha})$ for all  $\tilde{\alpha} < \alpha$
sufficiently close to $\alpha$. Furthermore, $\phi'_+(\alpha)$ and $\phi'_-(\alpha)$ represent the right-hand and left-hand
derivatives of $\phi(\alpha)$, respectively.
\begin{lemma}\label{lemma:left_right_pieces}
The following relation holds for all $\alpha \in (0,\| \omega - \omega_k \|)$: 
	\textbf{(i)} $\phi_{+,\alpha}(\tilde{\alpha}) \geq \phi_{{-},\alpha}(\tilde{\alpha})$ for all  $\tilde{\alpha} > \alpha$
	sufficiently close to $\alpha$;
	\textbf{(ii)} consequently $\phi'_+(\alpha) \geq \phi'_{-}(\alpha)$.
\end{lemma}
\begin{proof}
The functions $\phi_{-,\alpha}(\tilde{\alpha})$ and $\phi_{+,\alpha}(\tilde{\alpha})$ are of the form
\begin{equation}\label{eq:leftright_pieces}
	\phi_{-,\alpha}(\tilde{\alpha})	=	\sum_{k=1}^j d_k \tilde{\lambda}_{n_k}(\omega_k + \tilde{\alpha} p)
				\;\;\;	{\rm and}	\;\;\;
	\phi_{+,\alpha}(\tilde{\alpha})	=	\sum_{k=1}^j d_k \lambda_{k}(\omega_k + \tilde{\alpha} p)
\end{equation}
for some indices  $n_1, \dots, n_j$ and for all $\tilde{\alpha}  >  \alpha$ sufficiently close to $\alpha$. 
In (\ref{eq:leftright_pieces}), the latter equality follows from $\phi_{+,\alpha}(\tilde{\alpha}) = \phi(\tilde{\alpha}) = \lambda(\omega_k + \tilde{\alpha} p)$
for all $\tilde{\alpha} > \alpha$ sufficiently close to $\alpha$ by definition. The former equality is
due to $\phi_{-,\alpha}(\tilde{\alpha}) = \phi(\tilde{\alpha}) = \lambda(\omega_k + \tilde{\alpha} p)$
for all $\tilde{\alpha} < \alpha$ sufficiently close to $\alpha$ implying $\phi_{-,\alpha}(\tilde{\alpha})$
is a weighted sum of $j$ of the analytic eigenvalues 
$\tilde{\lambda}_{1}(\omega_k + \tilde{\alpha} p), \dots,  \tilde{\lambda}_{n}(\omega_k + \tilde{\alpha} p)$
with weights $d_1, \dots, d_j$.
We rephrase the inequality $\phi_{+,\alpha}(\tilde{\alpha})  \geq \phi_{-,\alpha}(\tilde{\alpha})$ as
$
	\phi_{+,\alpha}(\tilde{\alpha})	-	\phi_{-,\alpha}(\tilde{\alpha})	
							= 
	\sum_{k=1}^j d_k \cdot a_k  \geq 0,						
$
where $a_k =  \lambda_{k}(\omega_k + \tilde{\alpha} p)	-	 \tilde{\lambda}_{n_k}(\omega_k + \tilde{\alpha} p)$, where $ k = 1,\ldots,j$.

Note that $\sum_{k=1}^q a_k \geq 0$ for each $q = 1,\ldots,j$ since
$\lambda_{1}(\omega_k + \tilde{\alpha} p), \dots,  \lambda_{q}(\omega_k +
\tilde{\alpha} p)$
are the largest $q$ eigenvalues. In particular, their sum cannot be less than
the sum of
$\tilde{\lambda}_{n_1}(\omega_k + \tilde{\alpha} p), \dots,
\tilde{\lambda}_{n_q}(\omega_k + \tilde{\alpha} p)$.
Therefore,
\begin{eqnarray*}
\sum_{k=1}^{j} d_k \cdot a_k & = & d_j \sum_{k=1}^j a_k + (d_{j-1} - d_j) \sum_{k=1}^{j-1} a_k + (d_{j-2} - d_{j-1}) \sum_{k=1}^{j-2} a_k + \ldots \\
 & & \quad \quad + (d_1 - d_2) a_1, \\ 
 &\geq & 0, 
\end{eqnarray*}
since $d_1 \geq d_2 \geq \ldots \geq d_j \geq 0$.

Part \textbf{(ii)} is immediate from part \textbf{(i)} due to 
$\phi'_+(\alpha)  =  \phi'_{+,\alpha}(\alpha) \geq \phi'_{-,\alpha}(\alpha) = \phi'_-(\alpha)$,  
where the inequality follows from an application of the Taylor's theorem to $\phi_{+,\alpha}(\tilde{\alpha})$
and $\phi_{-,\alpha}(\tilde{\alpha})$ for $\tilde{\alpha} > \alpha$ and around $\alpha$.
\end{proof}

\begin{theorem}\label{thm:simplified_support}
Let $\omega_k \in {\mathbb R}^d$ be such that all of the eigenvalues
$\lambda_1(\omega_k), \dots, \lambda_j(\omega_k)$ are simple, and let $\gamma$
satisfy $\lambda_{\min} \left( \nabla^2 \lambda (\omega) \right) \geq \gamma$ for all 
$\omega \in {\cal B}$ such that $\lambda(\omega)$ is simple. Then, the following inequality 
holds for the extreme eigenvalue function $\lambda(\omega)$ given by (\ref{eq:extreme_eig})
and for all $\omega \in {\mathcal B}$:
\[
	\lambda(\omega) \;\;   \geq  \;\;   
						q_k(\omega)	  :=
						\lambda(\omega_k)   +   \nabla \lambda(\omega_k)^T (\omega - \omega_k)
									+	\frac{\gamma}{2} \| \omega - \omega_k \|^2.
\]
\end{theorem}
\begin{proof}
It follows from the Taylor's theorem that, for each $k = 0, \dots, m$ and $\alpha$ that belongs to the closure
of $(\alpha^{(k)}, \alpha^{(k+1)})$ (here we define $\alpha^{(0)} = 0$ and $\alpha^{(m+1)} = \| \omega - \omega_k \|$), we have
\[
		\phi(\alpha) 	= 		\phi(\alpha^{(k)})		+		\phi'_+(\alpha^{(k)}) (\alpha  -  \alpha^{(k)})
											+		\frac{ \phi''(\eta) }{2} (\alpha - \alpha^{(k)})^2,
\]
where $\eta \in (\alpha^{(k)}, \alpha)$. By observing
$
	 \phi''(\eta) 	=		p^T \nabla^2 \lambda (\omega_k + \eta p) p		\geq		\lambda_{\min} \left[  \nabla^2(\lambda(\omega_k + \eta p))  \right] 	\geq \gamma,
$
we deduce
\begin{equation}\label{eq:simp_support_ineq1}
		\phi(\alpha) 	\geq		\phi(\alpha^{(k)})		+		\phi'_+(\alpha^{(k)}) (\alpha  -  \alpha^{(k)})	+	\frac{\gamma}{2}	(\alpha  -  \alpha^{(k)})^2,
\end{equation}
and by differentiating we also deduce
\begin{equation}\label{eq:simp_support_ineq2}
  \phi'_{-}(\alpha)	\;\;  \geq \;\;		\phi'_+(\alpha^{(k)})		+		\gamma (\alpha  -  \alpha^{(k)}).
\end{equation}

Next, we prove the inequalities
\begin{eqnarray}
\label{eq;simp_support_ineq3}
\phi(\alpha^{(\ell)}) 		\;\;	\geq	\;\;	&	\phi(\alpha^{(0)})		+		\phi'_+(\alpha^{(0)}) (\alpha^{(\ell)}  -  \alpha^{(0)})	+	\frac{\gamma}{2}	(\alpha^{(\ell)}  -  \alpha^{(0)})^2, \\
\label{eq;simp_support_ineq4}
\phi'_{-}(\alpha^{(\ell)}) 		\geq	\;\;	&	\phi'_+(\alpha^{(0)})		+		\gamma (\alpha^{(\ell)}  -  \alpha^{(0)}),		\hskip 23ex
\end{eqnarray}
for each $\ell = 1, 2, \dots, m+1$ by induction. The inequalities (\ref{eq;simp_support_ineq3}) and (\ref{eq;simp_support_ineq4}) for $\ell = 1$ 
hold by applications of equations (\ref{eq:simp_support_ineq1}) and (\ref{eq:simp_support_ineq2}) with $\alpha = \alpha^{(1)}, \alpha^{(k)} = \alpha^{(0)}$. 
Let us suppose that the inequalities indeed hold for $\ell = 2, \dots, k$ as the inductive hypothesis.
Now, by another application of (\ref{eq:simp_support_ineq2}) and since $\phi'_+(\alpha^{(k)}) \geq \phi'_{-}(\alpha^{(k)})$ (see Lemma \ref{lemma:left_right_pieces}), 
we obtain
\begin{eqnarray*}
	\phi'_{-}(\alpha^{(k+1)}) 	\;	\geq	\;\; &	 \phi'_+(\alpha^{(k)})		+		\gamma (\alpha^{(k+1)}  -  \alpha^{(k)}),	\hskip 16ex \\
							\geq \;\; & \phi'_-(\alpha^{(k)})		+		\gamma (\alpha^{(k+1)}  -  \alpha^{(k)}),	\hskip 16ex \\
							\geq  \;\; &   \left[		\phi'_+(\alpha^{(0)})		+		\gamma (\alpha^{(k)}  -  \alpha^{(0)})		\right]
												+		\gamma (\alpha^{(k+1)}  -  \alpha^{(k)}), \\
							= \;\; 	&	\phi'_+(\alpha^{(0)})		+		\gamma (\alpha^{(k+1)}  -  \alpha^{(0)}), \hskip 16ex
\end{eqnarray*}
where we use the inductive hypothesis with $\ell = k$ in the third inequality. Furthermore, by (\ref{eq:simp_support_ineq1}), the inequality
$\phi'_+(\alpha^{(k)}) \geq \phi'_{-}(\alpha^{(k)})$, and exploiting the
inductive hypothesis with $\ell = k$, we end up with
\begin{eqnarray*}
	\phi(\alpha^{(k+1)}) 		\;\;	\geq	&	\phi(\alpha^{(k)})		+		\phi'_+(\alpha^{(k)}) (\alpha^{(k+1)}  -  \alpha^{(k)})	+	\frac{\gamma}{2}	(\alpha^{(k+1)}  -  \alpha^{(k)})^2, 	\hskip 8ex \\	
							\geq & 	\phi(\alpha^{(k)})		+		\phi'_-(\alpha^{(k)}) (\alpha^{(k+1)}  -  \alpha^{(k)})	+	\frac{\gamma}{2}	(\alpha^{(k+1)}  -  \alpha^{(k)})^2, 	\hskip 8ex \\
							=	&	
					\left[		\phi(\alpha^{(0)})		+		\phi'_+(\alpha^{(0)}) (\alpha^{(k)}  -  \alpha^{(0)})	+	\frac{\gamma}{2}	(\alpha^{(k)}  -  \alpha^{(0)})^2		\right]	\hskip 12ex \\
							&
	+		\left[\phi'_+(\alpha^{(0)})		+		\gamma (\alpha^{(k)}  -  \alpha^{(0)}) \right] (\alpha^{(k+1)}  -  \alpha^{(k)})	+	\frac{\gamma}{2}	(\alpha^{(k+1)}  -  \alpha^{(k)})^2, \\		
							= &	\;\;	\phi(\alpha^{(0)})		+		\phi'_+(\alpha^{(0)}) (\alpha^{(k+1)}  -  \alpha^{(0)})	+	\frac{\gamma}{2}	(\alpha^{(k+1)}  -  \alpha^{(0)})^2,		\hskip 9ex		
\end{eqnarray*}
proving the validity of (\ref{eq;simp_support_ineq3}) and (\ref{eq;simp_support_ineq4}) for each $\ell = 1, \dots, m+1$.

The inequality (\ref{eq;simp_support_ineq3}) with $\ell = m+1$  yields
\[
	\phi(\alpha^{(m+1)}) 		\;\;	\geq	\;\;		\phi(\alpha^{(0)})		+		\phi'_+(\alpha^{(0)}) (\alpha^{(m+1)}  -  \alpha^{(0)})	+	\frac{\gamma}{2}	(\alpha^{(m+1)}  -  \alpha^{(0)})^2,
\]
from which the result follows by noting $\phi(\alpha^{(m+1)}) = \lambda(\omega)$, $\phi(\alpha^{(0)}) = \lambda(\omega_k)$, 
$\phi'_+(\alpha^{(0)}) = \nabla \lambda(\omega_k)^T p$, and $(\alpha^{(m+1)} - \alpha^{(0)}) = \| \omega - \omega_k \|$.
\end{proof}

A lower bound $\gamma$ as stated in Theorem \ref{thm:simplified_support} can be deduced analytically in various 
cases. This is discussed next.


\section{Lower Bound $\gamma$ for the Extremal Eigenvalue Functions}\label{sec:extreme_low_bound}
The quadratic support functions discussed so far rely on the lower bound $\gamma$. Such bounds can 
be derived analytically for the extremal eigenvalue function (\ref{eq:extreme_eig})
with $d_1 \geq d_2 \geq \dots \geq d_j \geq 0$ in various cases.

\begin{theorem}\label{thm:extreme_low_bound}
Suppose ${\mathcal A}(\omega) : {\mathbb R}^d \rightarrow {\mathbb C}^{n\times n}$ is analytic and
Hermitian at all $\omega \in {\mathbb R}^d$. Then, $\lambda(\omega)$ defined as in (\ref{eq:extreme_eig})
satisfies
\[
	\lambda_{\min} \left( \nabla^2 \lambda (\omega) \right)	\geq \left( 	\sum_{k=1}^j d_k \right) \cdot  \lambda_{\min} \left(	\nabla^2{\mathcal A} (\omega)	\right)
\]
for each $\omega$ such that $\lambda_1(\omega), \dots, \lambda_j(\omega)$ are simple,
where $\nabla^2{\mathcal A}(\omega) \in {\mathbb C}^{nd\times nd}$ is given by
\[
	\nabla^2 {\mathcal A}(\omega)
				=
	\left[
			\begin{array}{cccc}
				\frac{\partial^2 {\mathcal A}(\omega)}{\partial \omega_1^2}
						&
				\frac{\partial^2 {\mathcal A}(\omega)}{\partial \omega_1 \partial \omega_2}
						&
					\dots
						&
				\frac{\partial^2 {\mathcal A}(\omega)}{\partial \omega_1 \partial \omega_d}		\\ 
				\frac{\partial^2 {\mathcal A}(\omega)}{\partial \omega_2 \partial \omega_1}
						&
				\frac{\partial^2 {\mathcal A}(\omega)}{\partial \omega_2^2}
						&
					\dots
						&
				\frac{\partial^2 {\mathcal A}(\omega)}{\partial \omega_2 \partial \omega_d}		\\
						&
						&
						\ddots
						&												\\
				\frac{\partial^2 {\mathcal A}(\omega)}{\partial \omega_d \partial \omega_1}
						&
				\frac{\partial^2 {\mathcal A}(\omega)}{\partial \omega_d \omega_2}
						&
					\dots
						&
				\frac{\partial^2 {\mathcal A}(\omega)}{\partial \omega_d^2}
			\end{array}
	\right].
\]
\end{theorem}
\begin{proof}
First observe that, by the formulas in Section \ref{subsec:multi_der}, we obtain
\begin{eqnarray*}
	\frac{\partial^2 \lambda(\omega)}{\partial \omega_\ell \; \partial \omega_i}
						& = 
		\sum_{k=1}^j
		d_k \cdot
		\left(
		v_k(\omega)^\ast	\frac{\partial^2 {\mathcal A}(\omega)}{\partial \omega_\ell \; \partial \omega_i} v_k(\omega)
		\right)
							+  \hskip 48ex \\
						&
		2\cdot \Re \left[ 
		\sum_{k=1}^j
			\left(
					\sum_{m=k+1}^n	
						\frac{d_k - d_m}{\lambda_k(\omega) - \lambda_m(\omega)}  
						\left(
							v_k(\omega)^\ast \frac{\partial {\mathcal A}(\omega)}{\partial \omega_\ell} v_m(\omega)
						\right) 
						\left(
							v_m(\omega)^\ast \frac{\partial {\mathcal A}(\omega)}{\partial \omega_i} v_k(\omega)
						\right)
			\right) \right],
\end{eqnarray*} 
where we define $d_{j+1} = \dots = d_n = 0$.
As for the Hessian, this yields
\begin{equation}\label{eq:ext_Hessian_inter}
	\nabla^2 \lambda (\omega)
				=
	\sum_{k=1}^j d_k {\mathcal H}^{(k)}(\omega)
				+
	2	\cdot
	\sum_{k=1}^j	\sum_{m= k+1}^n \frac{d_k - d_m}{\lambda_k(\omega) -
\lambda_m(\omega)}   \Re \left( {\mathcal H}^{(k,m)}(\omega) \right),
\end{equation}
where ${\mathcal H}^{(k)}(\omega), {\mathcal H}^{(k,m)}(\omega) \in {\mathbb R}^{d\times d}$ are such that the $(\ell, i)$ entries 
of ${\mathcal H}^{(k)}(\omega)$ and ${\mathcal H}^{(k,m)}(\omega)$ are given by
\[
	v_k(\omega)^\ast	\frac{\partial^2 {\mathcal A}(\omega)}{\partial \omega_\ell \; \partial \omega_i} v_k(\omega)
			\;\;	{\rm and}	\;\;
						\left(
							v_k(\omega)^\ast \frac{\partial {\mathcal A}(\omega)}{\partial \omega_\ell} v_m(\omega)
						\right) 
						\left(
							v_m(\omega)^\ast \frac{\partial {\mathcal A}(\omega)}{\partial \omega_i} v_k(\omega)
						\right),
\]
respectively. Furthermore, it can easily be verified that ${\mathcal H}^{(k,m)}(\omega)$ is positive semi-definite
for each $k$ and $m$. Indeed, denoting
$
	h^{(k,m)}_i := v_m(\omega)^\ast  ( \partial {\mathcal A}(\omega) / \partial \omega_i ) v_k(\omega),
$
for each $u \in {\mathbb C}^d$, observe that
\[
	u^T	{\mathcal H}^{(k,m)}(\omega) u	=	\left|		\sum_{i = 1}^d u_i  h^{(k,m)}_i		\right|^2 \geq 0.
\]
This in turn implies that $\Re \left( {\mathcal H}^{(k,m)}(\omega) \right)$ is positive semi-definite, i.e., 
since ${\mathcal H}^{(k,m)}(\omega) = \Re \left( {\mathcal H}^{(k,m)}(\omega) \right) + i \Im \left( {\mathcal H}^{(k,m)}(\omega) \right)$, for all
$u \in {\mathbb R}^d$ we have $u^T \Re \left( {\mathcal H}^{(k,m)}(\omega) \right) u = u^T {\mathcal H}^{(k,m)}(\omega) u$ $ \geq 0$.
Consequently, it follows from (\ref{eq:ext_Hessian_inter}) that 
\begin{eqnarray*}
	\lambda_{\min} \left(	 	\nabla^2 \lambda (\omega)	\right)
				\geq
	\lambda_{\min} \left(		\sum_{k=1}^j d_k {\mathcal H}^{(k)}(\omega)	\right)
				&
				\geq
				\sum_{k=1}^j d_k \lambda_{\min}  \left(   {\mathcal H}^{(k)}(\omega)  \right),	\\
				&
				\geq
				\sum_{k=1}^j d_k \lambda_{\min}  \left( \nabla^2{\mathcal A}(\omega) \right).
\end{eqnarray*}
To see the last inequality, note that 
${\mathcal H}^{(k)}(\omega) =  \left[ I_d \otimes v_k(\omega)^\ast \right] \cdot \nabla^2 {\mathcal A}(\omega) \cdot \left[ I_d \otimes v_k(\omega) \right]$,
where $\otimes$ denotes the Kronecker product, and therefore for some $v(\omega) \in {\mathbb C}^d$ of unit length
\begin{eqnarray*}
	\lambda_{\min}  \left(   {\mathcal H}^{(k)}(\omega)  \right)
						 =
		v(\omega)^\ast \cdot {\mathcal H}^{(k)}(\omega) \cdot v(\omega) 
						& =
	\left[ v(\omega)^\ast \otimes v_k(\omega)^\ast \right] \cdot  \nabla^2 {\mathcal A}(\omega) \cdot \left[ v(\omega) \otimes v_k(\omega) \right] \\
						& \geq
	\lambda_{\min}	  \left(   \nabla^2 {\mathcal A}(\omega)	\right). \hskip 26ex
\end{eqnarray*}
\end{proof}

There are various instances when the theorem above reveals an immediate lower bound $\gamma$
on $\lambda_{\min} \left( \nabla^2 \lambda (\omega) \right)$ over all $\omega \in {\cal B}$.
A particular instance is given by the corollary below when ${\mathcal A}(\omega)$ is a quadratic 
function of $\omega$.
\begin{corollary}\label{cor:extreme_quad_low_bound}
Suppose that ${\mathcal A}(\omega) : {\mathbb R}^d \rightarrow {\mathbb C}^{n\times n}$ is of the
form
\[
	{\mathcal A}(\omega)
			=
	A_0	+ \sum_{\ell =1}^d \omega_\ell A_\ell	+  \frac{1}{2} \sum_{\ell=1}^d \sum_{i=1}^d \omega_\ell \; \omega_i A_{\ell i}
\]
where $A_\ell$ and $A_{\ell i}$ are Hermitian and $A_{\ell i} = A_{i \ell}$. Then $\lambda(\omega)$ defined 
as in (\ref{eq:extreme_eig}) satisfies
\[
	\lambda_{\min} \left( \nabla^2 \lambda (\omega) \right)
				\geq
	\left( 	\sum_{k=1}^j d_k \right) \cdot  \lambda_{\min} 
		\left(		
			\left[
			\begin{array}{cccc}
					A_{11}
						&
					A_{12}
						&
					\dots
						&
					A_{1d}		\\ 
					A_{21}
						&
					A_{22}
						&
					\dots
						&
					A_{2d}		\\
						&
						&
						\ddots
						&		\\
					A_{d1}
						&
					A_{d2}
						&
					\dots
						&
					A_{dd}
			\end{array}
			\right]	
			\right)
\]
for each $\omega$ such that $\lambda_1(\omega), \dots, \lambda_j(\omega)$ are simple.
\end{corollary}

\section{The Algorithm}\label{sec:algorithm}
We consider the minimization of an eigenvalue function $\lambda : {\mathcal B} \rightarrow {\mathbb R}$ 
formally defined in Section \ref{sec:quad_support} over a box ${\mathcal B} \subset {\mathbb R}^d$. 
We will utilize the quadratic function given by (\ref{eq:quad_simple_multi}) under the assumption that this is 
indeed a support function. This is certainly the case for $\lambda(\omega) = \sum_{k=1}^j d_k \lambda_k(\omega)$ 
with weights $d_k$ such that $d_1 \geq d_2 \geq \dots \geq d_j \geq 0$ as discussed in Section \ref{subsec:support_extreme}
(specifically see Theorem \ref{thm:simplified_support}), and generically the case for all eigenvalue or singular
value functions that arise from various applications in Section \ref{sec:app_dyn_sys}, since these eigenvalue and
singular value functions are simple at all $\omega \in {\mathcal B}$ generically, as discussed in 
Section \ref{subsec:support_analyticity}. Thus the algorithm is applicable to all
of the problems in Section \ref{sec:app}.

The algorithm starts by constructing a support function $q_0$ about an arbitrary point
$\omega_0 \in {\mathcal B}$. The next step is to find the global minimizer
$\omega_1$ of $q_0$ in ${\mathcal B}$,
and to construct the quadratic support function $q_1$ about $\omega_1$. In
general, suppose there are $s+1$
support functions $q_0, q_1, \dots, q_s$ about the points $\omega_0, \omega_1, \dots, \omega_s$.
A new quadratic support function $q_{s+1}$ is constructed about $\omega_{s+1}$, which is a global
minimizer of
\begin{equation}\label{eq:piece_quad}
	\overline{q}_s(\omega) = \max_{k=0,\ldots,s} \; q_k(\omega).
\end{equation}

\begin{algorithm}
 \begin{algorithmic}[1]
  \REQUIRE{A box ${\mathcal B} \subset {\mathbb R}^d$ and an eigenvalue function 
  $\lambda: {\mathcal B} \rightarrow {\mathbb R}$ satisfying either the generic simplicity assumption
  or of the form $\lambda(\omega) = \sum_{k=1}^j d_k \lambda_k(\omega)$ with $d_1 \geq  \dots \geq d_j \geq 0$ 
  }
\STATE Pick an arbitrary $\omega_0 \in {\cal B}$. 
\STATE $u_1 \gets \lambda(\omega_0)$; $\omega best \gets \omega_0$.
\STATE $\overline{q}_0(\omega) := q_0 (\omega) := \lambda(\omega_0) +   \nabla
\lambda(\omega_0)^T (\omega - \omega_0) + (\gamma/2) \|\omega - \omega_0\|^2$.
\STATE $\omega_1 \gets \arg \min_{\omega \in {\cal B}} \; \overline{q}_0(\omega);~l_1 \gets \overline{q}_0(\omega_1)$.
\IF{$\lambda(\omega_1) < u_1$}
\STATE $u_1 \gets \lambda(\omega_1); ~\omega best \gets \omega_1$.
\ENDIF
\STATE $s \gets 1$.
\STATE While $u_s - l_s > \epsilon$ do
\LOOP
\STATE $q_{s}(\omega) := \lambda(\omega_s) + \nabla \lambda (\omega_s)^T (\omega - \omega_s)
 + (\gamma/2) \|\omega - \omega_s\|^2$.
\STATE $\overline{q}_s(\omega) := \max_{k=0,\ldots,s}\{q_k(\omega)\}$.
\STATE $\omega_{s+1} \gets \arg \min_{\omega \in {\cal B}} \; \overline{q}_s(\omega);~l_{s+1} \gets \overline{q}_s(\omega_{s+1})$.
\IF{$\lambda(\omega_{s+1}) < u_s$}
\STATE $u_{s+1} \gets \lambda(\omega_{s+1});~\omega best \gets \omega_{s+1}$.
\ELSE 
\STATE $u_{s+1} \gets u_s$.
\ENDIF
\STATE $s \gets s + 1$.
\ENDLOOP
\STATE {\bf Output:} $l_s, u_s, \omega best$.
 \end{algorithmic}
\caption{Support-based Eigenvalue Optimization}
\label{alg1}
\end{algorithm}

The details of the algorithm are formally presented in Algorithm \ref{alg1}. In
the algorithm, the computationally challenging task is the determination of  
a global minimizer of $\; \overline{q}_s$ defined by (\ref{eq:piece_quad}).
For this purpose, we partition 
the box ${\mathcal B}$ into regions ${\mathcal R}_0, \dots, {\mathcal R}_s$ such
that the quadratic function $q_k$ takes the largest value inside 
the region ${\mathcal R}_k$ (see Figure \ref{fig:partition_box}). Therefore, the
minimization of $\overline{q}_s$ over the region 
${\mathcal R}_k$ is equivalent to the minimization of $q_k$ over the same region. 
This problem can be posed as the following quadratic programming
problem:
\begin{equation}\label{eq:quad_program}
	  \begin{array}{l@{\hspace{10pt}}l}
			{\rm minimize}_{\omega\in {\mathbb R}^d}%
	             					& q_k(\omega)  \\[2ex]
		{\rm subject}\; {\rm to}     & q_k(\omega) \geq q_\ell(\omega), \;\; \ell
\neq k,  \\
			  				&  \omega_j  \in 
[\omega^{(l)}_j,\omega^{(u)}_j], \;\;\;\;\; j =1,\dots,d
 	 \end{array}
\end{equation}
Here, we remark that the seemingly quadratic inequalities $q_k(\omega) \geq
q_\ell(\omega)$ are in fact linear since the quadratic terms cancel out.
\begin{figure}
   	\begin{center}
		\includegraphics[height=0.3\textheight,]{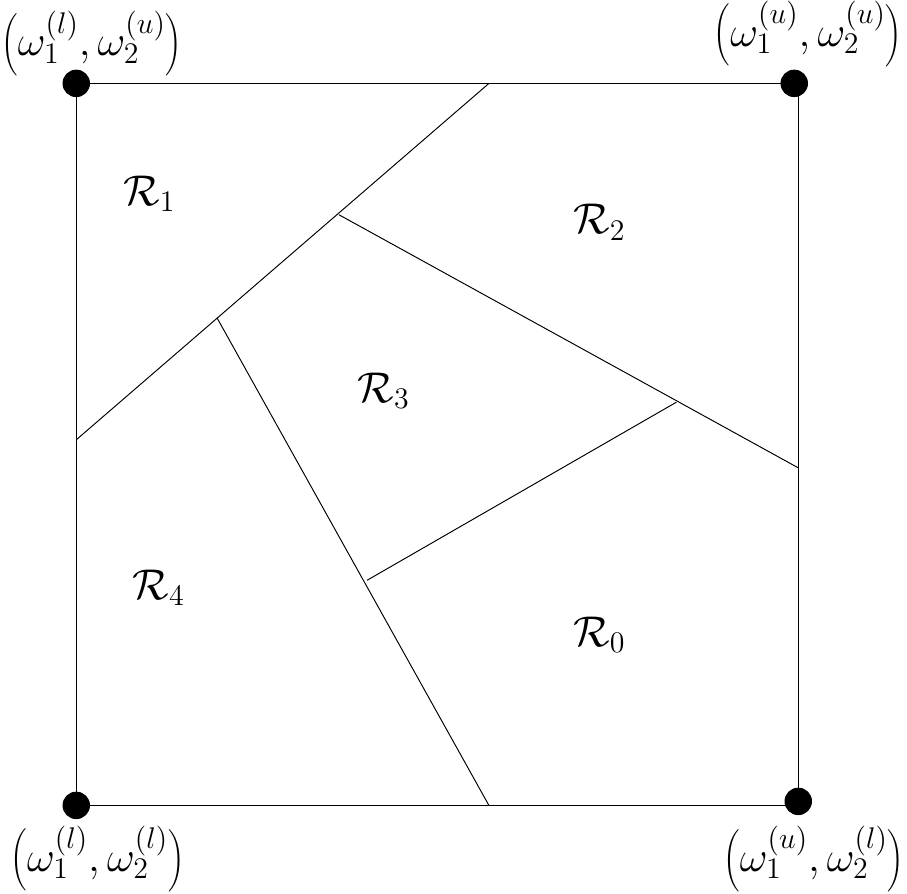}
	\end{center}
	    \caption{ To minimize the piece-wise quadratic function $\overline{q}_s$ over the box
	    ${\mathcal B}$, the box is split into regions ${\mathcal R}_k, \; k=0,\dots,s$ such
	    that $q_k$ is the largest inside the region ${\mathcal R}_k$. Above, a possible
	    partitioning with $s = 4$ is illustrated in the 2-dimensional case. 
}\label{fig:partition_box}
\end{figure}
The feasibility of the algorithm largely relies on the efficiency with which we
can solve
subproblems (\ref{eq:quad_program}). These subproblems are non-convex whenever
$\gamma < 0$. However, since they involve the optimization of a concave function
over a polytope, 
the solution for each subproblem must be attained at one of the vertices of the polytope.

Breiman and Cutler introduced the notion of quadratic support functions of the
form (\ref{eq:quad_simple_multi}) for global optimization 
\cite{Breiman1993}\footnote{We are grateful to an anonymous referee who
pointed us to this reference.}.  They described how
the
subproblems of the form (\ref{eq:quad_program}) can be solved efficiently. At
each iteration,
when a new quadratic support function is added, a new polytope is introduced
associated with it, and some points - call these dead vertices - that used to
be vertices of some polytope are no longer vertices (i.e., they now lie
strictly inside the
new polytope).
A vertex $v$ is dead after the introduction of $q_{s+1}(\omega)$ if and only if
$q_{s+1}(\omega) > \overline{q}_s(\omega)$. Breiman and Cutler observed that these 
dead vertices form a connected graph. Therefore, they can be identified
efficiently. Furthermore, 
a new vertex $v$ (on the boundary of the new polytope) appears between a dead 
vertex $v_d$ and each vertex $v_a$ - each existing vertex that is still a vertex after the
addition of $q_{s+1}(\omega)$ and adjacent to $v_d$ - given by the formula
\begin{eqnarray*}
  	v
		=
	\left[  \frac{q_{s+1} (v_a)   -   \overline{q}_s (v_a)}{ (q_{s+1} (v_a)   -   \overline{q}_s (v_a))   -    (q_{s+1} (v_d)   -   \overline{q}_s (v_d)) } \right]   v_d
		+ \hskip 30ex \\
	\hskip 30ex
	\left[ 1   -   \frac{q_{s+1} (v_a)   -   \overline{q}_s (v_a)}{ (q_{s+1} (v_a)   -   \overline{q}_s (v_a))   -    (q_{s+1} (v_d)   -   \overline{q}_s (v_d)) }   \right] v_a.
\end{eqnarray*}
These new vertices, possibly with the addition of some of the vertices where only
box constraints are active, form the set of vertices for the new polytope. The
edges for the new polytope can be determined by the common active constraints
of its vertices; two vertices are adjacent if and only if there are $d-1$ active 
constraints common to these vertices. All these observations accompanied 
by appropriate data structures (e.g., a heap of vertices indexed based on the values of
$\overline{q}_s$ after the formation of $q_s$ and an adjacency list for each
vertex)
lead to an efficient algorithm when the dimension $d$ is small.
We refer to \cite{Breiman1993} for further details.

Algorithm \ref{alg1} could be based on the more general piece-wise quadratic support 
functions (\ref{eq:quad_model_multi}) by adjusting lines 3 and 11 accordingly. This
would make the algorithm applicable for the optimization of more general eigenvalue
functions, i.e., those that do not involve the sum of the largest eigenvalues 
and violating generic simplicity everywhere. Subproblems analogous to (\ref{eq:quad_program}) could
be devised, however, their solutions appear prohibitively expensive.

\section{Convergence Analysis}\label{sec:1d_anal}

In this section, we analyze the convergence of Algorithm \ref{alg1} - in the general setting
when the support functions (\ref{eq:quad_model_multi}) are used on lines 3 and 11 - for the
following optimization problem:  
\[
\textrm{(P)} \quad \lambda^* := \min_{\omega \in {\cal B}} \lambda(\omega).
\]
Recall that the algorithm starts off by picking an arbitrary point $\omega_0 \in {\cal B}$. At iteration $s$, the algorithm picks 
$\omega_{s+1}$ to be a global minimizer of $\overline{q}_{s}(x)$ over ${\mathcal B}$, where $\overline{q}_{s}(\omega)$ is the 
maximum of the functions $q_k(\omega)$ constructed at the points $\omega_k,~k = 0,\ldots,s$ in ${\cal B}$.  Note that 
$\{l_s\}$ is a non-decreasing sequence of lower bounds on $\lambda^*$, while $\{u_s\}$ is a non-increasing sequence of upper 
bounds on $\lambda^*$. 

We require that $\lambda: {\mathcal B} \rightarrow {\mathbb R}$ be a continuous
and piece-wise function 
defined in terms of the differentiable functions $\tilde{\lambda}_j: {\mathbb R}^d \rightarrow {\mathbb R}$, $j = 1,\ldots,n$
as described in Section \ref{sec:quad_support}. The differentiability of each
$\tilde{\lambda}_j$ on ${\mathbb R}^d$ 
implies the boundedness of $\| \nabla \tilde{\lambda}_j (x) \|$ on ${\mathcal B}$. Consequently, we define
\[
	\mu :=  \max_{j=1,\ldots,n} \max_{\omega \in {\mathcal B}} \| \nabla \tilde{\lambda}_j(\omega) \|.	
\]
We furthermore require each piece $\tilde{\lambda}_j$ to be analytic along every line in ${\mathbb R}^d$,
and exploit the existence of a scalar $\gamma$ that satisfies (\ref{eq:lb_sec_der}). Our convergence analysis 
depends on the scalars $\mu$ and $\gamma$. We now establish the convergence of Algorithm~\ref{alg1} to 
a global minimizer of (P).

\begin{theorem} \label{conv_theorem}
 Let $\{\omega_s\}$ be the sequence of iterates generated by Algorithm~\ref{alg1}, in the general case
 when the support functions (\ref{eq:quad_model_multi}) are used on lines 3 and 11. \textbf{(i)} Every limit point of this 
 sequence is a global minimizer of the problem (P). \textbf{(ii)} Furthermore $\lim_{s \rightarrow \infty} u_s = \lim_{s \rightarrow \infty} l_s = \lambda^\ast$.
\end{theorem}
\begin{proof}
Since ${\cal B}$ is a bounded subset of ${\mathbb R}^d$, it follows that the sequence $\{\omega_s\}$ has at least one limit point $\omega^* \in {\cal B}$. 
By passing to a subsequence if necessary, we may assume that $\{\omega_s\}$ itself is a convergent sequence. Let $l^*$ 
denote the limit of the bounded nondecreasing sequence $\{l_s\}$. Since $l_s \leq \lambda^* \leq \lambda(\omega_s)$ for each 
$s \geq 0$, it suffices to show that $l^* = \lambda(\omega^*)$.

Suppose, for a contradiction, that there exists a real number $\delta > 0$ such that 
\begin{equation} \label{contra1}
 \lambda(\omega^*) \geq l^* + \delta.
\end{equation}
By the continuity of $\lambda$, there exists $s_1 \in \N$ such that 
\begin{equation} \label{def_s1}
 \lambda(\omega_s) \geq l^* + \frac{\delta}{2}, \quad \textrm{for all}~s \geq s_1.
\end{equation}
Since $\omega^*$ is the limit of the sequence $\{\omega_s\}$, there exists $s_2 \in \N$ such that
\begin{equation}\label{def_s2}
\|\omega_{s^\prime} - \omega_{s^{\prime \prime}}\|  <  \min\left\{\sqrt{\frac{\delta}{6 | \gamma |}}, \frac{\delta}{12 \mu} \right\}, \quad \textrm{for all}~s^\prime \geq s^{\prime \prime} \geq s_2,
\end{equation}
where we define $1/\mu := +\infty$ if $\mu = 0$, and $1/|\gamma | := +\infty$ if $\gamma = 0$. 
Let $s_* = \max\{s_1,s_2\}$. For each $s \geq s_*$, it follows from the definition of the functions 
$\overline{q}_s(\omega)$ that
\begin{eqnarray*}
 \overline{q}_s(\omega_{s+1}) & \geq & \overline{q}_{s_*} (\omega_{s+1}), \\
				      & \geq & q_{s_*}(\omega_{s+1}), \\
 		& = & \lambda(\omega_{s_*}) + \nabla \tilde{\lambda}_{j_*}(\omega_{s_*})^T (\omega_{s+1} - \omega_{s_*}) + \frac{\gamma}{2} \|\omega_{s+1} - \omega_{s_*}\|^2.
\end{eqnarray*}
where $j_* \in \{1,\dots,n \}$ is the index of the function that determines the
value of $q_{s_*}(w_{s+1})$ (see 
(\ref{eq:quad_model_multi})). Now, by applying the
Cauchy-Schwarz and triangle inequalities, and then using the inequalities
(\ref{def_s1}) and (\ref{def_s2}), we arrive at
\begin{eqnarray*}
  \overline{q}_s(\omega_{s+1})  & \geq & \lambda(\omega_{s_*}) -  \| \nabla \tilde{\lambda}_{j_*} (x^*)\| \|\omega_{s+1} - \omega_{s_*}\| - \frac{| \gamma |}{2} \|\omega_{s+1} - \omega_{s_*}\|^2, \\
				      & \geq & \left( l^* + \frac{\delta}{2} \right) -  
				      		    \left( \mu \cdot  \frac{\delta}{12\mu} \right) - 
						    \left( \frac{| \gamma |}{2} \cdot \frac{\delta}{6 | \gamma |} \right), \\
 				      & = & l^* + \frac{\delta}{3}.
\end{eqnarray*}
Using the definition $l_{s+1} = \overline{q}_s(\omega_{s+1})$, it follows that
\[
 l_{s+1} \geq l^* + \frac{\delta}{3}, \quad \textrm{for all}~s \geq s_*.
\]
Since $\delta > 0$, this contradicts our assumption that $l^*$ is the limit of the non-decreasing 
sequence $\{l_s\}$. Therefore, we have $\lambda(\omega^*) < l^* + \delta$ for all $\delta > 0$, or equivalently 
$\lambda(\omega^*) \leq l^*$.  Since $l_s \leq \lambda(\omega)$ for all $s \in \N$ and $\omega \in {\mathcal B}$, 
it follows that $l^* \leq \lambda(\omega^\ast)$, which establishes that $\lambda(\omega^*) = l^* \leq \lambda(\omega)$ for all 
$\omega \in {\cal B}$. Therefore, $\omega^*$ is a global minimizer of (P).  Moreover, $\lim_{s\rightarrow \infty} l_s = \lambda^*$.
The sequence $\{ u_s \}$ must also converge to $\lambda^*$, which can be deduced by observing 
$\lambda^* \leq u_s \leq \lambda(\omega_s)$ for each $s \in \N$ and taking the limit as $s \rightarrow \infty$. 
The proof of assertion \textbf{(i)} is completed by repeating the same argument 
for any other limit point of the sequence $\{\omega_s \}$.   

Assertion \textbf{(ii)} can be concluded by noting that the monotone, 
bounded sequences $\{ l_s \}$ and $\{ u_s \}$ must converge, and they
have subsequences converging to $\lambda^*$.
\end{proof}

\section{Numerical Experiments}\label{sec:eig_opt}

We compare Algorithm \ref{alg1} with the following algorithms: 
	\begin{enumerate}
		\item[\bf{(1)}] a brute force approach;
		\item[\bf{(2)}] the Piyavskii-Shubert algorithm
\cite{Piyavskii1972, Shubert1972} (only one-dimensional case);
		\item[\bf{(3)}] DIRECT method \cite{Jones1993};
		\item[\bf{(4)}] the specialized level-set based algorithms
whenever possible.
	\end{enumerate}
The brute force approach \textbf{(1)} splits the box  ${\mathcal B}$ into sub-boxes 
of equal side-lengths  and the eigenvalue function is computed at the corners of the sub-boxes.
 Algorithms \textbf{(2)} and \textbf{(3)} are global optimization techniques 
based on Lipschitz continuity of the function. The latter method \textbf{(3)}
benefits from several Lipschitz constant estimates simultaneously, 
while the former one \textbf{(2)} utilizes a global Lipschitz constant.
Algorithms that fall into the category \textbf{(4)} are  
level-set based approaches. They typically converge fast, but each iteration is
costly. Each of the algorithms in \textbf{(4)}
is devised for a particular eigenvalue optimization problem.

\vskip 1ex

\noindent
\textbf{Example 1 (Numerical Radius):}
This one-dimensional example concerns the calculation of the numerical radius (defined and 
motivated in Section \ref{sec:app_dyn_sys}) of an $n\times n$ matrix $A$. The matrix-valued function 
involved is ${\mathcal A}(\theta) = -(Ae^{i\theta} + A^\ast e^{-i\theta})/2$ and
$\lambda_n(\theta) := \lambda_n({\mathcal A}(\theta))$ is sought to be minimized over all $\theta \in [0,2\pi]$.
The numerical radius of $A$ corresponds to the negative of this globally minimal value of $\lambda_n(\theta)$.
Here, we assume that the generic analyticity holds, that is the eigenvalue $\lambda_n(\theta)$
is simple for all $\theta$. The derivative 
\begin{equation}\label{eq:num_rad_1der}
	\frac{d \lambda_n(\theta)} {d\theta}
				=
		\Im
		\left(
		v_n^{\ast}(\theta)
			Ae^{i\theta}
		v_n(\theta)
		\right)
\end{equation}
can be deduced from (\ref{eq:eigval_1der}).
We numerically observe that typically $\lambda_n''(\theta) \geq -2\| A\|$ holds
for all $\theta$, and set $\gamma = - 2\| A \|$.  We specifically focus on matrices
\[
	A_n = P_n	- 	(n/20) \cdot i R_n
\]
of various sizes, where $P_n$ is an $n\times n$ matrix obtained from a finite
difference discretization of the Poisson operator, and $R_n$ is a random $n\times n$ matrix
with entries selected from a normal distribution with zero mean and unit variance.
This is a carefully chosen challenging example, as $\lambda_n(\theta)$ has 
many local minima (see Figure \ref{fig:f_numrad_Poisson}). 

In Table \ref{tab:num_rad_fevals}, 
the function evaluations of algorithms \textbf{(1)-(3)} are given along with the function evaluations 
of Algorithm \ref{alg1} for the matrix $A_n$ with $n = 400$ and with respect to absolute accuracy. 
Algorithm \ref{alg1} - called eigopt in the table - converges 
linearly; this is evident from about fixed number of function evaluations
required for every two-decimal-digit accuracy.
All other algorithms, including DIRECT method, converge sublinearly. The computed global minimizer
is marked (with an asterisk) on a plot of $\lambda_n(\theta)$ with 
respect to $\theta \in [0, 2\pi]$ in Figure \ref{fig:f_numrad_Poisson} on the left. 
In the same figure on the right, the first five quadratic support functions formed are shown for the same example. 
The level-set based algorithm \textbf{(4)} for the numerical radius \cite{Mengi2005}  requires the solutions 
of eigenvalue problems twice the size of ${\mathcal A}(\theta)$. It is not included in 
Table \ref{tab:num_rad_fevals}, because these larger 
eigenvalue problems dominate the computation time rather than the calculation of $\lambda_n(\theta)$. 
Instead we compare the CPU times (in seconds) of Algorithm \ref{alg1} and this
specialized algorithm on Poisson 
matrices $A_n$ of various sizes $n$ in Table \ref{tab:num_rad_cpu}. 
Algorithm \ref{alg1} is run to retrieve  the results with at least 10-decimal-digit accuracy. 
The table displays the superiority of the running times 
of Algorithm \ref{alg1} as compared to those of the level-set approach.

\begin{table}[h]
\begin{center}
\begin{tabular}{|c||llllll|}
\hline
$\epsilon$		&	$10^{-2}$	&	$10^{-4}$	&	$10^{-6}$	&	$10^{-8}$		&	$10^{-10}$	& 	$10^{-12}$	\\	
\hline
\hline
eigopt			&	46 		&	59 		&	69 		&	79		&	89		&	98			\\
brute force		&	881		&	8812		&	88125	&	881249	&	8815191	&	86070462 	\\	
Piyavskii-Shubert	&	1907		&	18817	&	--		&	--		&	--		&	--			\\
DIRECT			&	25		&	51		&	61		&	105		&	245		&	597			\\
\hline
\end{tabular}
\caption{ Number of function evaluations by various algorithms to compute the numerical
radius of the $400\times 400$ Poisson example (Example 1) with respect to absolute accuracy $\epsilon$.
Note that eigopt makes use of the derivatives in addition to the eigenvalues evaluated. However,
their derivatives come essentially at no cost once the eigenvalues are evaluated.
} \label{tab:num_rad_fevals}
\end{center}
\end{table}

\begin{figure}[h]
	\begin{tabular}{ll}
		\includegraphics[width=.48\textwidth, height=.40\textheight]{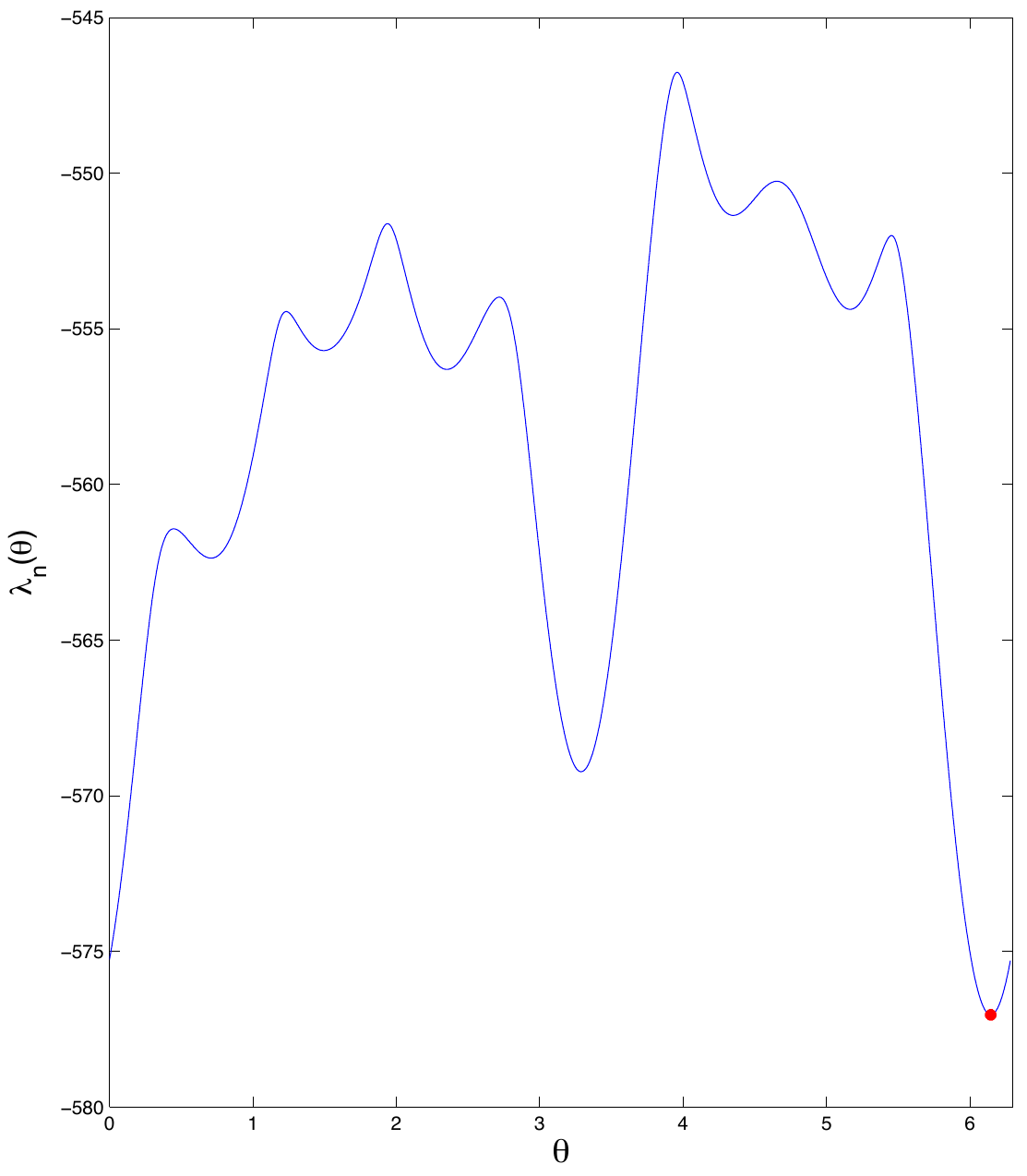} &
		\includegraphics[width=.51\textwidth, height=.40\textheight]{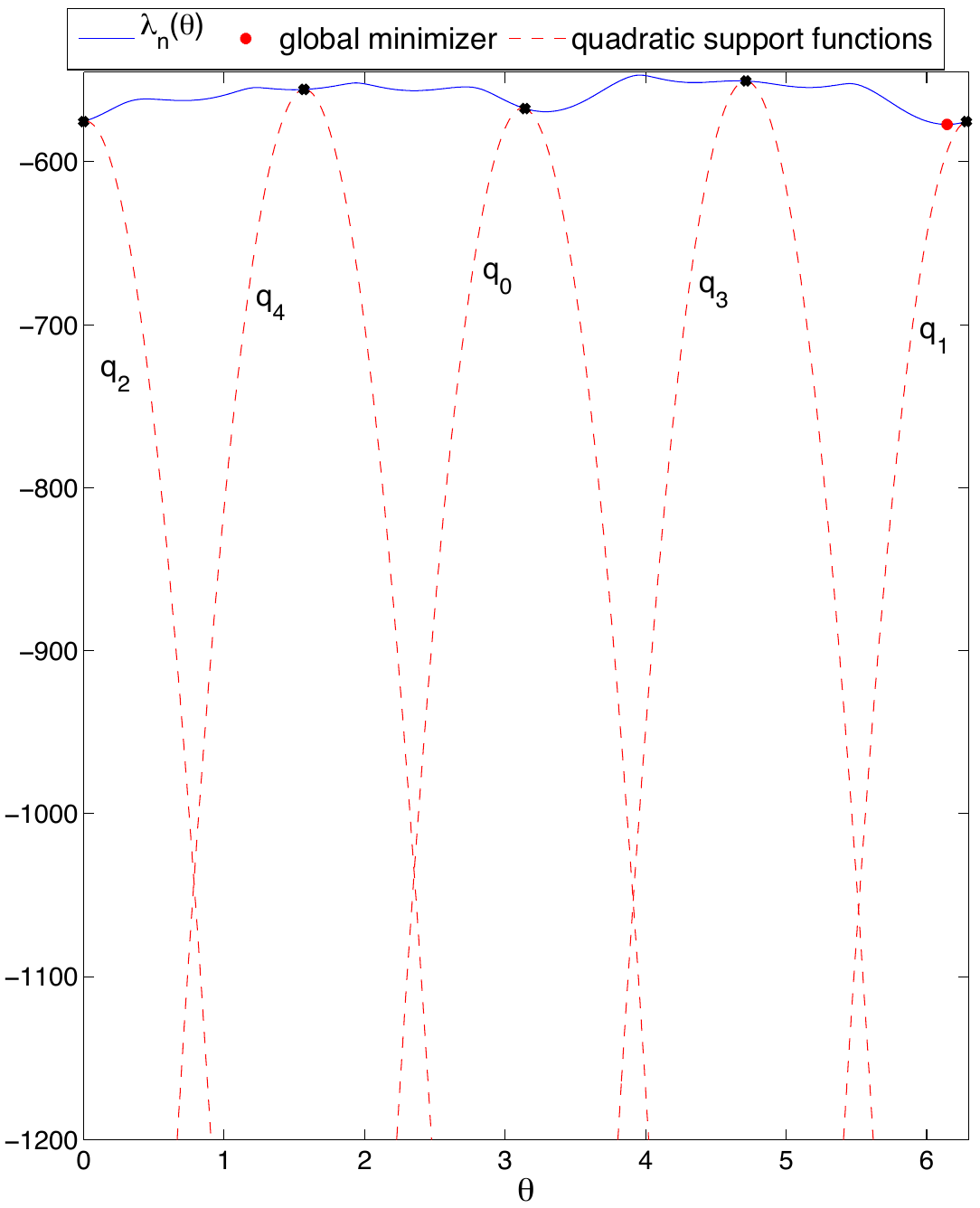} \\
	\end{tabular}
	    \caption{  On the left, the plot of  $\lambda_n({\mathcal A}(\theta))$ for the 
	    $400\times 400$ Poisson example (Example 1) with respect to $\theta$; 
	    On the right, the plot of  $\lambda_n({\mathcal A}(\theta))$ together with the
	    first five quadratic support functions $q_j, \; j = 0, \dots, 4$ formed.
	    In both figures, the asterisk represents the global minimizer, the squares
	    on the right mark the iterates of the algorithm around which the support functions
	    are constructed. }\label{fig:f_numrad_Poisson}
\end{figure}

 \begin{table}[h]
 \begin{center}
	\begin{tabular}{|c||ccccc|}
	\hline
		  $n$			& 400	 & 900 	&	 1600 	& 2500 	    & 3600	\\
	\hline
	\hline
		eigopt		& 14		 &  103	&	 328 	 	  &	1079       &	2788		\\
		level-set		& 17		 &  181	&	 1477	  &	--           &	--	\\
	\hline
	\end{tabular}
 \end{center}
 \caption{ CPU times (in seconds) required by Algorithm \ref{alg1} (eigopt) and
the specialized
 level-set approach in \cite{Mengi2005} to compute the numerical radius of Poisson
 matrices of varying size $n$. }\label{tab:num_rad_cpu}
 \end{table}

\vskip 1ex

\noindent
\textbf{Example 2 (Distance to Uncontrollability):}
Next, we consider the distance to uncontrollability defined and motivated in
Section \ref{sec:app_dyn_sys}.
Here, for a given linear system 
$ \;\;
		x'(t) = Ax(t) + Bu(t),
  \;\;
$ 
where $A \in {\mathbb C}^{n\times n}, B\in {\mathbb C}^{n\times m}$ are such that $n\geq m$,
the smallest singular value of 
			$
				{\mathcal A}(z)
					=
				\left[
				\begin{array}{cc}
					A - z I	&	B
				\end{array}
			\right]$
is sought to be minimized over $z \in {\mathbb C}$.  We again assume generic simplicity, that is the 
multiplicity of $\sigma_n(z) := \sigma_n \left( {\mathcal A}(z) \right)$ is one for all $z$. This property holds on a dense subset
of all pairs $(A, B)$.  In this case, expressions for the gradient is given by
(\ref{eq:first_der_sval}). In particular, 
denoting a consistent pair of unit left and right singular vectors associated with $\sigma_n(z)$ 
by $u_n(z) \in {\mathbb C}^n, 
	v_n(z)
		=
		\left[
			\begin{array}{c}
				\tilde{v}_n(z) \\
				\hat{v}_n(z) \\
			\end{array}
		\right] \in {\mathbb C}^{n+m}$ 
where $\tilde{v}_n(z) \in {\mathbb C}^n, \hat{v}_n(z) \in {\mathbb C}^m$, we have
 \[
	\nabla \sigma_n(z)
			=
	\left(
		\frac{\partial \sigma_n (z)}{\partial \Re z}, \frac{\partial \sigma_n (z)}{\partial \Im z}
	\right)
			=
	\left(
	\;
	-\Re
	\left(
		u_n^{\ast}(z)\tilde{v}_n(z)
	\right), \;
	\Im
	\left(
		u_n^{\ast}(z)\tilde{v}_n(z)
	\right)
	\;
	\right).
\]
Furthermore, $\gamma = -4$ appears to be a good lower bound for $\lambda_{\min} \left( \nabla^2 \sigma_n (z) \right)$ numerically.
We perform tests on linear systems $(A,B)$ arising from a discretization of the heat equation, taken from 
the SLICOT library, see \cite[Example 3.2]{Kressner1998}. The matrix $A$ is real, symmetric, tridiagonal, and $n\times n$, 
whereas $B$ is real and $n\times 1$. 

In Table $\ref{tab:minimize_du}$, the number of function evaluations required by
global optimization algorithms for Lipschitz continuous functions, and
Algorithm \ref{alg1} are presented when the order of the
system satisfies $n = 30$. The Piyavskii-Shubert algorithm is omitted, because
it would be based on locating global
minimizers of piecewise cones, and it is not immediate how one would locate these
minimizers. Once again, the number of function
evaluations for Algorithm \ref{alg1} seems to suggest linear convergence.

The progress of the algorithm can be traced 
from the graph associated with it. Recall that the box is split into subregions
(indeed polytopes). Inside each subregion, 
one of the quadratic support functions dominates the others. For the heat
equation example of order $n = 30$, these graphs 
are provided after 60, 360, and 580 iterations in Figure
\ref{fig:graph_progress} in the top three plots. The bottom plot 
in Figure \ref{fig:graph_progress} is an illustration of the level sets of $\sigma_n(z)$ in the complex 
plane along with the computed global minimizer (accurate up to 13 decimal digits after 589 function evaluations) marked 
with an asterisk. The box is split into subregions more or less uniformly
initially, e.g., after 60 iterations. However, later 
iterations form finer subregions around minimizers of $\sigma_n(z)$. 

We also compare Algorithm \ref{alg1}
with the level-set approach in \cite{Gu2006} on the heat equation examples of varying order. The level set approaches 
become prohibitively expensive as the number of optimization parameters increases. Here, with the minimization over two  
parameters, it requires the solutions of eigenvalue problems of size $n^2$ for a system of order $n$. For small systems, 
the level-set approach works very well, however, even for medium-scale systems
it becomes computationally infeasible. This is illustrated 
in Table \ref{tab:du_rad_cpu}. On the other hand, Algorithm \ref{alg1} is capable
of solving even a problem of order 1000 
in a reasonable amount of time.

\begin{table}
\begin{center}
\begin{tabular}{|c||llllll|}
\hline
$\epsilon$		&	$10^{-2}$	&	$10^{-4}$	&	$10^{-6}$	&	$10^{-8}$		&	$10^{-10}$	& 	$10^{-12}$		\\	
\hline
\hline
eigopt			&	520		&	531				&	543						&	556					&	572					& 	585			  \\
brute force		&	$4.2\times 10^6$	&	$4.2\times 10^{10}$	&	$4.2\times 10^{14}	$		&	$4.2\times 10^{18}$	&	$4.2\times 10^{22}$	&  $4.2\times 10^{26}$	\\	
DIRECT			&	37781		&	37867		&	37867		&	38233		&	38441		&	38805			\\
\hline
\end{tabular}
\caption{ Number of function evaluations by various algorithms to compute the distance to uncontrollability
for the linear system $(A,B)$ arising from the heat equation \cite[Example
3.2]{Kressner1998}, where 
$A\in {\mathbb R}^{30\times 30}$, $B\in {\mathbb R}^{30\times 1}$ with respect to absolute 
accuracy $\epsilon$.} \label{tab:minimize_du}
\end{center}
\end{table}

\begin{figure}
   	\begin{tabular}{c}
		\includegraphics[width=\textwidth,]{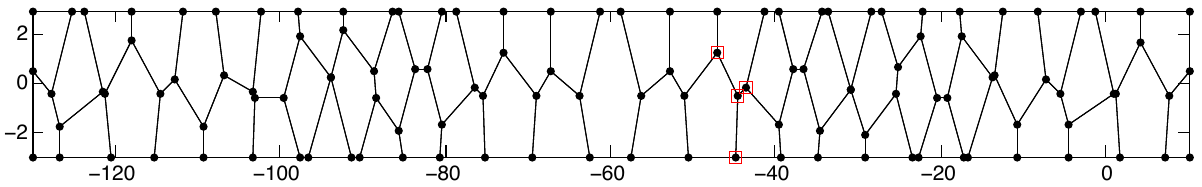} \\
		\includegraphics[width=\textwidth,height=0.12\textheight]{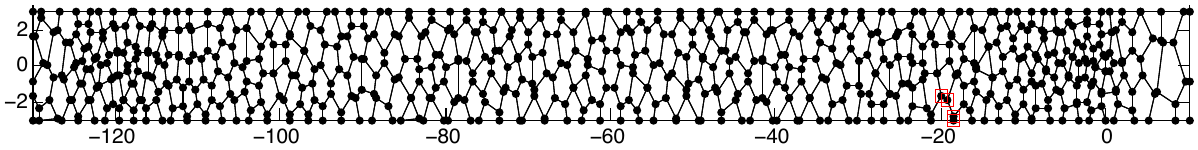} \\
		\includegraphics[width=\textwidth,height=0.12\textheight]{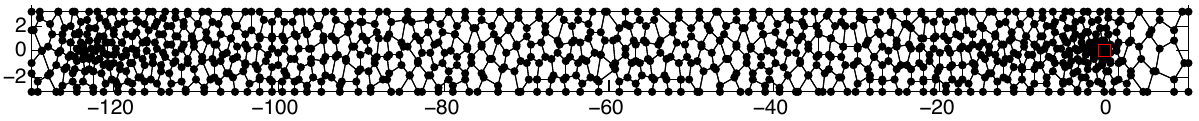} \\
		\includegraphics[width=\textwidth,]{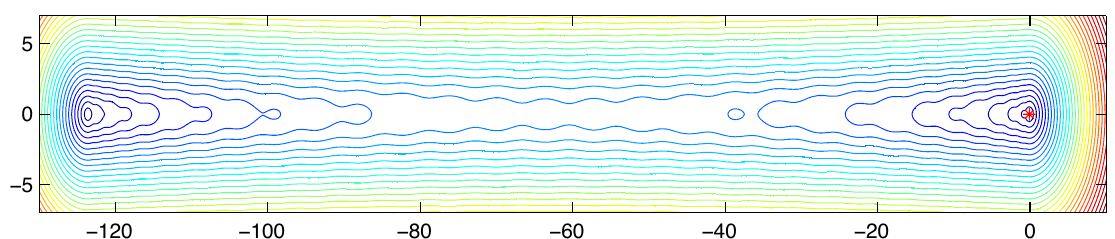} \\
	\end{tabular}
	    \caption{ The top three plots display the graph associated with Algorithm \ref{alg1} after
	    60, 360, and 580 iterations to compute the distance to uncontrollability for the heat equation
	    example \cite[Example 3.2]{Kressner1998}  of order $n=30$ (see Example 2). The red squares 
	    correspond to dead vertices. The bottom plot displays the level sets of $\; \sigma_n(z)$
	    for the same heat equation example together with the computed global minimizer marked
	    with an asterisk. }\label{fig:graph_progress}
\end{figure}

 \begin{table}
 \begin{center}
	\begin{tabular}{|c||cccccc|}
	\hline
		  $n$			& 30	 	& 		60 		&	 100 		& 	200 	    &   400	&	1000	 \\
	\hline
	\hline
		eigopt		& 45		 &	 	 46	&	 56 	 	  &	85       &	119	&	384	  \\
		level-set		& 20		 &		 393	&	 --	  	&	--          &	-- 	& 	--	\\
	\hline
	\end{tabular}
 \end{center}
 \caption{ CPU times (in seconds) required by Algorithm \ref{alg1} (eigopt) and
the level-set approach in \cite{Gu2006} to compute the distance to uncontrollability 
for the heat equation examples (Example 2) with respect to the order of the system $n$. }\label{tab:du_rad_cpu}
 \end{table}

\vskip 1ex

\noindent
\textbf{Example 3 (Minimizing the Largest Eigenvalue, Convex):}
This example is taken from \cite{Fan1995}, and concerns the minimization of the largest eigenvalue of 
an affine matrix function of the form
\[
	{\mathcal A}(\omega)	= 	A_0	+ \sum_{j=1}^5 \omega_j A_j,
\]
depending on five parameters, where $A_j \in {\mathbb R}^{5\times 5}, \;
j=0,\dots,5$, are real, symmetric and as in \cite{Fan1995},
where the minimal value of $\lambda_1(\omega) := \lambda_1({\mathcal A}(\omega))$ is cited as $0.708882597$.
Whenever ${\mathcal A}(\omega)$ is affine, the largest eigenvalue 
$\lambda_1(\omega)$ is convex by Theorem \ref{thm:extreme_low_bound} (see also
\cite[Theorem A.1]{Fletcher1985}), so we set $\gamma = 0$. Thus, this eigenvalue optimization problem can be 
posed as a semi-definite program (SDP), and solved by means of interior-point
methods \cite{Nesterov1994}. 
The purpose here is to compare the performances of Algorithm \ref{alg1}, and DIRECT method on a multi-dimensional 
example for which the solution is known (even though our algorithm is
really devised for non-convex problems,
and in all likelihood an interior-point method would outperform it in the convex
case). 

This comparison is provided in
Table \ref{tab:minimize_largest_convex}, where we again observe linear
convergence of Algorithm \ref{alg1}. DIRECT method is not suitable even for a few decimal-digit
precision. Indeed, even after
500,000 function evaluations and 3732 seconds of CPU time, it cannot achieve six
decimal-digit accuracy. 
In contrast to the one-dimensional and two-dimensional examples, keeping the
graph structure properly, in particular forming the adjacencies
between the new vertices, take almost all of the computational time rather than the function evaluations.  
Even if the matrices $A_j$ were much bigger than $5\times 5$, the computation times would not be affected 
significantly. Therefore, in the table, the CPU times (in seconds) are also
provided in parenthesis. 

The later iterations are more expensive, since the number of new vertices created increases at the
later iterations. Up to four dimensions, the increase in the number of vertices
at every iteration seems 
more or less fixed. In contrast, this does not hold when the dimension is five
or more.
(These observations are solely based on numerical experiments; we do not have a
clear understanding 
of this phenomenon at the moment.) 
This is illustrated for the affine example with five
parameters in Figure 
\ref{fig:vertices}. In the figure on the left, the solid and dashed lines
represent the number of dead vertices
and the number of newly added vertices, respectively, at iterations $1, 2,
\dots, 107$.
The graph indicates an increase in
both the number of dead vertices and the number of new vertices with respect to
the iteration. However, the
increase in the number of new vertices is larger. On the right, the total number
of vertices is displayed
with respect to the iteration number. The graph reveals that the number of
vertices seems to increase superlinearly.

\begin{table}
\begin{center}
\begin{tabular}{|c||llllll|}
\hline
$\epsilon$		&	$10^{-2}$	&	$10^{-4}$	&	$10^{-6}$	&	$10^{-8}$		&	$10^{-10}$	& 	$10^{-12}$		\\	
\hline
\hline
eigopt			&	17 (34)	&	34 (203)		&	50 (501)	&	67	(1021)	&	87 (2040)		& 	108 (3725)	\\
DIRECT			&	459 		&	45937	&	--		&	--			&	--			&	--			\\
\hline
\end{tabular}
\caption{ Number of function evaluations by Algorithm \ref{alg1} (eigopt) and DIRECT
method for minimizing the
largest eigenvalue of the affine matrix function (Example 3) with respect to absolute
accuracy $\epsilon$. The CPU times
for eigopt (in seconds) are also provided in parentheses. } \label{tab:minimize_largest_convex}
\end{center}
\end{table}

\begin{figure}
   	\begin{tabular}{ll}
		\includegraphics[width=.50\textwidth,]{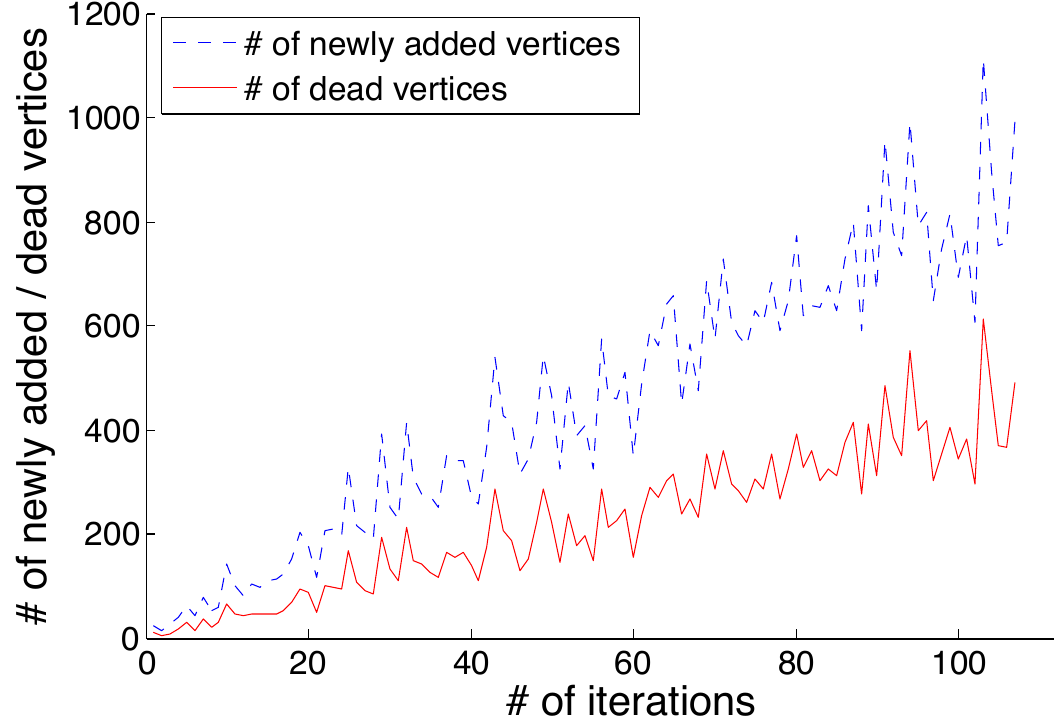} &
		\includegraphics[width=.47\textwidth,]{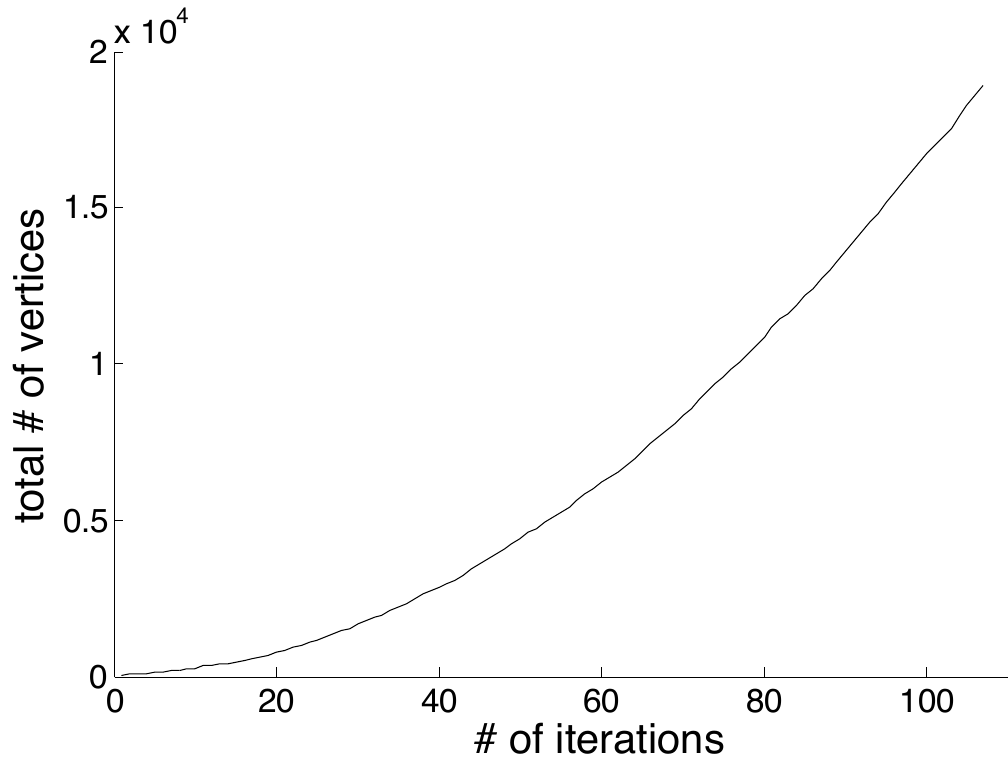} \\
	\end{tabular}
	    \caption{  On the left, the number of dead vertices (solid line) and
the number of
	    newly added vertices (dashed line) with respect to the iteration number are displayed,
	    whereas on the right, the total number of vertices with respect to
the iteration number
	    is displayed for Example 3. }\label{fig:vertices}
\end{figure}

\vskip 1ex

\noindent
\textbf{Example 4 (Minimizing the Largest Eigenvalue, Non-convex):}
This is a non-convex example depending on four parameters,  involving the minimization of the 
largest eigenvalue of the matrix function
\begin{equation}\label{eq:quad_mat_func}
	{\mathcal A}(\omega) 
			=
		A_0
			+
		\sum_{j=1}^4 \omega_j A_j
			+
		\frac{1}{2} \sum_{j=1}^4 \sum_{k=1}^4 \omega_j \omega_k A_{jk}, 
\end{equation}
where $A_0, A_1, \dots, A_4$ are as in the previous affine example taken from \cite{Fan1995}, while $A_{jk}$ are randomly
chosen $5\times 5$ symmetric and real matrices such that $A_{jk} = A_{kj}$. By Corollary \ref{cor:extreme_quad_low_bound},
a lower bound for $\lambda_{\min} \left( \nabla^2 \lambda_1 \left( {\mathcal A}(\omega)  \right)  \right)$
is given by $\gamma = \lambda_{\min} \left( \nabla^2 {\mathcal A} \right)$ where $\nabla^2 {\mathcal A} \in {\mathbb R}^{20\times 20}$,
and $A_{jk}$ constitutes the $5\times 5$ submatrix of $\nabla^2 {\mathcal A}$ at rows $5(j-1)+1:5j$ and columns $5(k-1)+1:5k$.
Table \ref{tab:minimize_largest_nonconvex} displays a comparison of the number of function
evaluations for Algorithm \ref{alg1} and DIRECT method with respect to
accuracy. As usual, Algorithm \ref{alg1} seems to exhibit linear convergence.

\begin{table}
\begin{center}
\begin{tabular}{|c||llllll|}
\hline
$\epsilon$		&	$10^{-2}$	&	$10^{-4}$	&	$10^{-6}$	&	$10^{-8}$		&	$10^{-10}$	& 	$10^{-12}$		\\	
\hline
\hline
eigopt			&	52 		&	141 		&	225 		&	309			&	391			& 	472 			\\		
DIRECT			&	45		&	161		&	369		&	939			&	1367			&	2049			\\
\hline
\end{tabular}
\caption{ Number of function evaluations by Algorithm \ref{alg1} (eigopt) and DIRECT method for minimizing the
largest eigenvalue of the matrix function (\ref{eq:quad_mat_func}) with respect to absolute accuracy $\epsilon$. }
\label{tab:minimize_largest_nonconvex}
\end{center}
\end{table}

\section{Software}
A MATLAB implementation of Algorithm \ref{alg1} is available on the web. The implementation makes use of heaps, adjacency lists, 
stacks, and updates the underlying graph structure efficiently. The user is expected to write down a MATLAB routine 
that calculates the eigenvalue function and its gradient at a given point. The name of this routine and $\gamma$,
a global lower bound on the minimum eigenvalues of the Hessians of the eigenvalue functions must be
supplied by the user. We refer to the web page associated with this
implementation\footnote{http://home.ku.edu.tr/$\sim$emengi/software/eigopt.html}, where a user guide is also provided.

\section{Conclusion}\label{sec:conc}
The analytical properties of eigenvalues of matrix-valued functions facilitate
the use of so-called quadratic support functions that globally underestimate the
eigenvalue curves for their optimization. This observation motivates the idea of
adapting support function based global
optimization approaches, especially the approach due to Breiman and Cutler \cite{Breiman1993},
for non-convex eigenvalue optimization. In this paper, we illustrated how such
global optimization
approaches based on the derivative information could be realized in the context
of non-convex
eigenvalue optimization. We derived the necessary quadratic support functions, elaborated
on deducing analytical global lower bounds $\gamma$ for the second derivatives of the extreme
eigenvalue functions, which are essential for the algorithm, and provided a global convergence
proof. The algorithm is especially applicable for the optimization of extreme
eigenvalues, for instance, for the minimization of the largest eigenvalue, and
those eigenvalue
functions that exhibit generic simplicity.

\vskip 2ex

\noindent
\textbf{Acknowledgements} We thank two anonymous referees for their invaluable comments,
which improved this paper considerably. We are also grateful to Michael Karow and Melina
Freitag for helpful discussions and feedback.

\bibliography{eig_optimization}
\end{document}